\newcommand{\fed}{\,\rule{.1mm}{.24cm}\rule{.24cm}{.1mm}\,}
\newcommand{\nc}{\newcommand}
\newtheorem{thm}{Theorem}
\newtheorem{lem}[thm]{Lemma}
\newtheorem{cor}[thm]{Corollary} 
\nc{\R}{{\mathbb R}^d} 
\nc{\N}{{\mathbb N}} 
\nc{\K}{{\cal K}}
\DeclareMathOperator{\reg}{\rm reg} 
\DeclareMathOperator{\Nor}{\rm Nor}
\DeclareMathOperator{\inter}{\rm int}
\nc{\kpo}{(\K^d_{2,gp})_o}
\nc{\krep}{\K^d_{3,gp}} 
\nc{\BP}{\mathbb{P}} 
\nc{\BQ}{\mathbb{Q}}
\nc{\BE}{\mathbb{E}}
\nc{\dF}{\partial F} 
\nc{\ve}{\varepsilon}
\nc{\V}{{\cal V}_{\ve}(\dF)}
\begin{document}

\title {Differentiation of sets - The general case} 
\author{Est\'ate V. Khmaladze \and Wolfgang Weil}

\date{\today}


\maketitle
\begin{abstract} In recent work by Khmaladze and Weil \cite{KW} and by Einmahl and Khmaladze \cite{EiK}, limit theorems were established for local empirical processes near the boundary of compact convex sets $K$ in $\R$. The limit processes were shown to live on the normal cylinder $\Sigma$ of $K$, respectively on a class of set-valued derivatives in $\Sigma$. The latter result was based on the concept of differentiation of sets at the boundary $\partial K$ of $K$, which was developed in Khmaladze \cite{Khm07}. Here, we extend the theory of set-valued derivatives to boundaries $\partial F$ of rather general closed sets $F\subset \R$, making use of a local Steiner formula for closed sets, established in Hug, Last and Weil \cite{HLW}.
\end{abstract}

\section{Introduction}

\noindent
The general aim of this work is to describe infinitesimal changes in the shape of a set in $\R$ through an appropriate notion of a {\it derivative set}. Namely, if bounded sets $F(\ve)\subset\R$ converge, as $\ve\to 0$, to a given set $F$, then we want to say what is the derivative of $F(\ve)$, at $\ve =0$. We hereby extend the approach, which was developed in \cite{Khm07} under convexity assumptions.

This line of research  is motivated by a class of problems in spatial statistics. To be more precise, consider a set $A\subset \R$ marking the boundary between two regions in $\R$ which carry two different probability distributions. Given $n$ random points $\xi_1,...,\xi_n$ chosen independently from the compound distribution in $\R$, the statistical challenge is to draw information about the geometry of $A$ from the empirical process given by the $\xi_i$.  This {\it change set} problem is a natural generalization of the {\it change point} problem on the real line (where $A$ consists of one point only), a classical problem in statistics (see, e.g., \cite{CMS, BD}). The change set problem is of a more recent nature (cf. \cite{KhMT1, KhMT2, IM, KTs}). For the case where $A=\partial K$ is the boundary of a convex body $K$ (a compact convex set in $\R$), the local empirical process in the neighborhood of $\partial K$ was studied in Khmaladze and Weil \cite{KW} and a Poisson limit result was established, as the neighborhood shrinks. The approach made use of a Steiner formula for support measures (curvature measures), which sit on the normal bundle of $K$, and the limit process was shown to live on the corresponding normal cylinder. More recently, Einmahl and Khmaladze \cite{EiK} proved a central limit theorem for such local empirical processes. The Gaussian limit process which they established sits on certain derivative sets in the normal cylinder. This approach required the notion of derivative of sets in measure, a concept which was developed in Khmaladze \cite{Khm07}. 

Indeed, if a particular choice of a region $K$ is considered as a hypothesis, then the challenging problem is to distinguish, by statistical methods, between this $K$ and a class of possible small deformations $\tilde K$  of $K$. It is natural to describe {\color{red} each such deformation} $\tilde K= K(\ve)$ as a set-valued function, converging to $K$ as $\ve\to 0$. As a stable trace of the deviation $K(\ve)\triangle K$ of $K(\ve)$ from $K$, it is consequent to establish a derivative of $K(\ve)$ at $K$ as a set in a properly chosen domain. The local point processes in the neighborhood of the boundary $A=\partial K$ will live asymptotically on the class of such derivative sets, as was shown in \cite{EiK}, \cite{KW}. Derivative sets of this type are of interest in infinitesimal image analysis in general.

It should be mentioned that the differentiation of set-valued functions is a well-established field of research and prominent concepts, much older than that of \cite{Khm07}, exist. In particular, the tangent cone approach is described in Aubin and Frankowska \cite{AF} and Borwein and Zhu \cite{BZ} and provides a classical tool in this field. A much advanced form of affine mappings, the multi-affine mappings of Artstein \cite{Art} along with the quasiaffine mappings of Lemar\'echal and Zowe \cite{LZ} demonstrate another approach to the differentiability of sets.

So far, in the papers \cite{Khm07}, \cite{EiK}, \cite{KW} mentioned above, the basic set $K$ was assumed to be compact and convex. This provided a convenient geometric situation. The set had a well defined outer and inner part, each boundary point had at least one outer normal, the boundary and the normal bundle had finite $(d-1)$-dimensional Hausdorff measure ${\cal H}^{d-1}$, the normal cylinder had an unbounded upper part and a bounded lower part, and the support measures were finite and nonnegative. For applications, of course, more general set classes would be interesting. Some generalizations, for example to polyconvex sets (finite unions of convex bodies) or to sets of positive reach, are possible with minor modifications. In the following, we aim for a rather general framework allowing closed sets with only few topological regularity properties and we discuss the differentiation of such sets in the spirit of \cite{Khm07}. In the background is a general Steiner formula for closed sets, established in \cite{HLW}, which we will use intensively. 

General closed sets $F\subset \R$ can have quite a complicated structure. They need not have a defined inner and outer part. Even in the compact case, their boundary can have  infinite Hausdorff measure ${\cal H}^{d-1}(\partial F)$ or even positive Lebesgue measure $\mu_d (\partial F)>0$. Boundary points $x\in\partial F$ need not have any normal, but also can have one, two or infinitely many normals. Consequently, the normal bundle $\Nor (F)$ of $F$ (or $\Nor (\partial F)$ of $\partial F$), as it was defined in \cite{HLW}, can also have a rather complicated structure. Moreover, the support measures of $F$, which were introduced in \cite{HLW} as ingredients of the general Steiner formula, are signed Radon-type measures. They are finite only on sets in the normal bundle with local reach bounded from below (see Section 2, for detailed explanations). In our attempt to define the derivative of a family $F(\varepsilon)$ at a set $F$, we therefore concentrate on two important situations, which simplify the presentation but are still quite general. First, in Section 3, we consider compact sets $F$ which are the closure of their interior and satisfy $\mu_d(\partial F) =0$. We call these {\it solid} sets. Second, in Section 4, we discuss {\it boundary} sets $F$. These are compact sets without interior points and with $\mu_d (F)=\mu_d(\partial F)=0$. Based on these two set classes, we then study, in Section 5, a differentiation were bifurcation in a set-valued function may occur. The next section, Section 6, investigates some important examples of set functions which are differentiable in our sense, namely families $F(\varepsilon)$ which arise as local or global (outer) parallel sets. In the final section, we discuss some variants of the differentiability concept. We start in Section 2 with collecting the necessary notations and preliminary results.

\section{Preliminaries }

\noindent
In the following,  $F$ is a nonempty closed set in ${\R}$ and $\dF$ denotes its boundary.
For $z\in\R $, let $p(z)=p_{F}(z)$ be the metric projection of $z$ onto $F$, that is, 
the point in $F$ nearest to $z$,
$$
\| z - p(z)\| = \min_{x\in F} \|z - x\|,
$$
and let $d(z)=d_F(z)=\|z-p(z)\|$ be the distance from $z$ to $F$. For $\ve >0$, the {\it $\ve$-neighborhood} $F_\ve$ of $F$ is defined as
$$
F_\ve = \{ z\in\R : d(z)\le\ve\}. 
$$
The {\it skeleton} of $F$ is the set 
$$
S_{F} =\{z \in {\R}: p(z) {\rm \; is \; not \; unique}\}.
$$
It is known that  $\mu_d(S_{F}) = 0$, where  $\mu_d$ is the Lebesgue measure in $\R$ (see \cite{HLW}). If $z\notin S_F\cup F$, then $p(z)\in\dF$ and we let $u(z)=u_F(z)$ be the corresponding direction, namely the vector in the unit sphere $S^{d-1}$ given by
$$
u(z) = \frac{z-p(z)}{d(z)} .
$$
We call $u=u(z)$ an {\it (outer) normal} of $F$ in $x=p(z)$. Note that a point $x\in \dF$ can have more than one normal (we denote by $N(x)$ the set of all normals in $x$) and that also some points $x\in\dF$ may not have any normal. In that case, we put $N(x)=\emptyset$.

The {\it (generalized) normal bundle} $\Nor(F)$ of $F$ is the subset of $\dF\times S^{d-1}$ defined as
$$
\Nor(F) =\{ (x,u): x \in \dF, \; u \in N(x) \}.
$$ 
Thus, $\Nor (F)$ consists of all pairs $(x,u)$ for which there is a point $z\notin S_F\cup F$ with $x=p(z)$ and $u=u(z)$. Such a point is then of the form $z=x+tu$ with $t=d(z)>0$. Since the ball $B(x+tu,t)$ touches $F$ only in the point $x$, this implies that the whole segment $[x,x+tu]$ projects (uniquely) onto $x$. This fact gives rise to the {\it reach function} $r=r_F$ of $F$, which is defined on $\Nor (F)$,
$$
r(x,u) =\sup \{s>0 : p(x+su) = x \}.
$$
Note that in \cite{HLW}, a reach function $\delta$ on $\Nor (F)$ was defined in a slightly different way  (by $\delta(x,u) =\inf \{s>0 : x+su \in S_F \}$).
It is easy to see that $r\le\delta$ and J. Kampf (unpublished) gave an example of a set $F$ and a pair $(x,u)\in\Nor (F)$ such that $r(x,u)<\delta (x,u)$. In the following main result from \cite{HLW}, the {\it local Steiner formula}, $\delta$ appeared in the statement in \cite{HLW}, but the correct reach function $r$ was used in the proof. 

Before we can formulate the result, we need to recall from \cite{HLW} the notion of a {\it reach measure} $\Theta (F,\cdot)$ of $F$. For $(x,u)\in \Nor (F)$, let $h(x,u)\in [0,\infty ]$ be defined by
$$
h(x,u) =  \max \{\| x\|, r (x,u)^{-1}\} .
$$
A subset $A\subset \Nor (F)$ is {\it $h$-bounded} if $A\subset \{ h\le c\}$, for some $0\le c<\infty$. A {\it signed $h$-measure} $\Theta$ is then a set function with values in $[-\infty, \infty]$, defined on the system of $h$-bounded Borel sets  in $\Nor (F)$ and such that the restriction of $\Theta$ to each set $\{ h\le c\}$,  $0\le c<\infty$, is a signed measure of finite variation. For a signed $h$-measure, the Hahn-decomposition on each set $\{ h\le c\}$ leads to  a unique representation $\Theta = \Theta^+-\Theta^-$ with mutual singular $\sigma$-finite measures $\Theta^+, \Theta^-\ge 0$ which are finite on each sublevel set $\{ h\le c\}$,  $0\le c<\infty$. $\Theta^+,\Theta^-$ and the total variation measure $|\Theta| = \Theta^++\Theta^-$ can then be extended (in a unique way) to all Borel sets in $\Nor (F)$, but this is not possible, in general, for $\Theta$. Instead of a signed $h$-measure $\Theta$ we speak of an {\it $r$-measure} (reach measure) $\Theta (F,\cdot)$ in the following and we call Borel sets $A\subset \Nor(F)$ {\it $r$-bounded} if they are $h$-bounded, for the specific function $h$ defined above. We also write $|\Theta|(F,\cdot)$ for the variation measure.

We denote the minimum of $a,b\in{\mathbb R}$ by $a\wedge b$.

\begin{thm}[\cite{HLW}]\label{SF}
For any non-empty closed set $F\subset\R$, there exist uniquely determined
$r$-measures $\Theta_0(F,\cdot),...,\Theta_{d-1}(F,\cdot)$ of $F$ satisfying
\begin{equation}\label{intcond}
\int_{\Nor (F)} {\bf 1}_B(x)(r( x, u) \wedge c)^{d-i} |\,\Theta_i |(F, d(x, u)) < \infty, 
\end{equation}
for $i = 0, . . . , d - 1$, all compact sets $B\subset \R$ and all $c > 0$, such that, for any
measurable bounded function $f : \R \to {\mathbb R}$ with compact support, we have
\begin{align} \label{steiner1}
\int_{\R\setminus F}& f(z) \mu_d(dz )\cr 
&= \sum_{j=1}^{d} \binom{d-1}{j-1}\int_{\Nor(F)}\int_0^{r(x,u)} f(x+tu) t^{j-1}\, dt\,  \Theta_{d-j}(F,d(x,u)).
\end{align}
\end{thm}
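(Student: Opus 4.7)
My plan is to derive (\ref{steiner1}) by a change-of-variables identity applied to the map
\[
\Phi\colon\{(x,u,t)\in\Nor(F)\times(0,\infty):t<r(x,u)\}\to\R\setminus(F\cup S_F),\quad\Phi(x,u,t)=x+tu.
\]
By definition of the reach function $r$, together with the uniqueness of the metric projection off $S_F$, $\Phi$ is a bijection onto its image, and since $\mu_d(S_F)=0$ that image carries the full Lebesgue measure of $\R\setminus F$. Hence, provided an area formula is available, the integral on the left of (\ref{steiner1}) can be transported to the normal bundle, and the entire content of the theorem then reduces to computing the Jacobian of $\Phi$ and identifying the resulting factors as the $\Theta_i(F,\cdot)$.

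The first step is to show that $\Nor(F)$ is a countably $(d-1)$-rectifiable subset of $\R\times S^{d-1}$. I would decompose $\Nor(F)=\bigcup_k\{(x,u):r(x,u)\ge 1/k,\,\|x\|\le k\}$ and observe that on each piece the mapping $(x,u)\mapsto x+u/(2k)$ is bi-Lipschitz onto a subset of $\partial F_{1/(2k)}$, which is itself $(d-1)$-rectifiable by the classical Rademacher argument for the distance function $d_F$. Rectifiability then gives, at $\mathcal H^{d-1}$-a.e.\ $(x,u)\in\Nor(F)$, an approximate tangent $(d-1)$-plane on which one can diagonalize the differential of the Gauss-type map $(x,u)\mapsto(x,-u)$ and extract generalized principal curvatures $k_1(x,u),\ldots,k_{d-1}(x,u)\in\R\cup\{+\infty\}$ subject to $k_i(x,u)\le 1/r(x,u)$. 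The harder step is to verify that, with respect to these curvatures, the area-formula Jacobian of $\Phi$ equals $\prod_{i=1}^{d-1}(1-tk_i(x,u))$; this is the Federer-type second-fundamental-form computation for an arbitrary closed set (no convexity, no positive reach), and it is where the bulk of the work in \cite{HLW} lies and is clearly the main obstacle.

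Granting the Jacobian, the area formula applied to $\Phi$ turns the integral on the left of (\ref{steiner1}) into
\[
\int_{\Nor(F)}\int_0^{r(x,u)}f(x+tu)\prod_{i=1}^{d-1}(1-tk_i)\,dt\,\mathcal H^{d-1}(d(x,u)),
\]
and expanding the product produces $\sum_{j=1}^{d}(-1)^{j-1}e_{j-1}(k_1,\ldots,k_{d-1})\,t^{j-1}$, where $e_{j-1}$ denotes the $(j-1)$-st elementary symmetric polynomial. Defining
\[
\Theta_{d-j}(F,A)=\frac{(-1)^{j-1}}{\binom{d-1}{j-1}}\int_{A}e_{j-1}(k_1,\ldots,k_{d-1})\,\mathcal H^{d-1}(d(x,u))
\]
delivers (\ref{steiner1}). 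The bound $k_i\le 1/r$ gives $|e_{j-1}|\le\binom{d-1}{j-1}r^{-(j-1)}$, so combined with $(r\wedge c)^{d-j}$ the local integrability (\ref{intcond}) reduces to $\mathcal H^{d-1}$-finiteness of the sublevel sets $\{r\ge 1/k,\,\|x\|\le k\}$, which once again follows from the bi-Lipschitz embedding into $\partial F_{1/(2k)}$. Uniqueness is a standard linear-independence argument: inserting $f(z)=\mathbf 1_B(p(z))g(d(z))$ into (\ref{steiner1}) reduces the identity to a polynomial equation in $t$ tested against arbitrary continuous $g$ supported in $(0,s)$, and the linear independence of $1,t,\ldots,t^{d-1}$ forces equality of each $\Theta_{d-j}$ on sets of the form $B\times[0,s]$, hence on all $r$-bounded Borel sets by a monotone-class argument.
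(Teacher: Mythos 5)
First, note that the paper itself offers no proof of Theorem \ref{SF}: it is imported verbatim from \cite{HLW}, so your attempt can only be measured against that source. Your outline correctly identifies the overall shape of the argument there (rectifiability of $\Nor(F)$ via bi-Lipschitz embeddings into level sets of the distance function, generalized principal curvatures, a disintegration of $\mu_d$ along normal rays, and uniqueness by testing with functions of the form $\mathbf 1_A(p(z),u(z))g(d(z))$ plus linear independence of $1,t,\dots,t^{d-1}$). But the proposal contains both a genuine gap and a genuine error. The gap is the one you yourself flag: the existence of the curvatures $k_i$ and the validity of an area formula with Jacobian $\prod_i(1-tk_i)$ on the normal bundle of an \emph{arbitrary} closed set is essentially the whole content of the theorem, and ``granting the Jacobian'' grants everything. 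The way \cite{HLW} actually closes this is not a direct Federer-type computation on $\Nor(F)$: they observe that for $s>0$ the set $\cl(\R\setminus F_s)$ has positive reach at least $s$, apply the established Steiner formula and normal cycle theory of Federer and Z\"ahle to that set, identify $\Nor(F)\cap\{r>s\}$ with part of its normal bundle via $(x,u)\mapsto(x+su,u)$ together with the explicit transformation law of the curvatures under this shift, and then let $s\to 0$. Without some such reduction to the positive-reach case, your sketch is not a proof.

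The error concerns your curvature bound and the resulting definition of the measures. The reach controls the generalized curvatures on one side only (the side on which a ball of radius $r(x,u)$ rolls freely in the complement); on the other side they are unbounded and may equal $+\infty$ in absolute value. Already for a convex polytope one has $r\equiv\infty$ while some $k_i$ are infinite along the edges, so your inequality $|e_{j-1}(k)|\le\binom{d-1}{j-1}r^{-(j-1)}$ would force $e_{j-1}=0$, and your density $e_{j-1}(k)$ against $\mathcal H^{d-1}$ is $\pm\infty$ there; yet $\Theta_{d-2}$ of a polytope is a nontrivial finite measure concentrated precisely on those edges. The correct densities are $e_{j-1}(k)/\prod_i\sqrt{1+k_i^2}$ (interpreted by continuity when $k_i=\infty$), which are bounded by $\binom{d-1}{j-1}$; the factor $(r\wedge c)^{d-i}$ in \eqref{intcond} is then needed not to tame the curvatures but to tame $\mathcal H^{d-1}(\{r\ge s\})$, which blows up as $s\to 0$, and this is also where the signed, merely $r$-bounded nature of the $\Theta_i$ enters. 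Finally, in the uniqueness step $\mathbf 1_B(p(z))$ with $B\subset\R$ does not separate normal directions at points with several normals; you need test functions $\mathbf 1_A(p(z),u(z))g(d(z))$ with $A\subset\{r\ge s\}$ an $r$-bounded Borel subset of $\Nor(F)$ and $g$ supported in $(0,s)$, after which your Vandermonde argument goes through.
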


The measures $\Theta_0(F,\cdot),...,\Theta_{d-1}(F,\cdot)$ will be called the {\it support measures} of $F$. This notation is justified by the case of convex bodies (compact convex sets) $F$, where the result is well-known and involves the classical support measures of $F$ (see \cite{Sch}). For convex bodies $F$ the reach function $r$ is infinite, $r(x,u)=\infty$.
The local Steiner formula includes the classical Steiner formula (for convex bodies $F$),
\begin{equation} \label{steiner0}
 \mu_d((F+rB^d)\setminus F ) = \frac{1}{d}\sum_{j=1}^{d} \binom{d}{j} t^{j}\Theta_{d-j}(F,\Nor (F)), 
\end{equation}
where the total measures $\Theta_i(F,\Nor (F)), i=0,...,d-1$, are proportional to the {\it intrinsic volumes} of $F$. 

Whereas, for a convex body $F$, the  $\Theta_i(F,\cdot), i=0,..., d-1,$ are finite (nonnegative) Borel measures on $\Nor (F)$, the situation is  more complicated for closed sets $F$. As we have explained above, the $r$-measures  $\Theta_i(F,\cdot), i=0,..., d-1,$ can attain negative values and are only defined on $r$-bounded sets, in general.  Hence the notion of $r$-measures is similar to the one of signed Radon measures, as they appear in functional analysis. Since the total variation measure $|\Theta_i|(F,\cdot) = \Theta_i^+(F,\cdot)+\Theta_i^-(F,\cdot)$ exists on all Borel sets in $\Nor (F)$, the integrability relation \eqref{intcond} guarantees that the integrals on the right side of \eqref{steiner1} exist (without any restriction) and are finite.
For more details, see \cite{HLW}.

We call a boundary point $x\in\dF$ {\it regular}, if $N(x)$ consists either of one vector $u$ or of two antipodal vectors $u,-u$. Let $\reg(F)$ be the set of regular points of $\dF$.

In the following, we are first interested in closed sets $F$, which are {\it solid} in the sense that $F$ is the closure of its interior and that $\mu_d(\partial F)=0$ holds. For such sets, we will also develop an expansion into the interior. This can be done simply by replacing $F$ by $F^*$, the closure of the complement of $F$. We have
$$
\Nor (\dF) = \Nor (F)\cup \Nor (F^*),\quad \Nor (F)\cap \Nor (F^*) = \emptyset .
$$
This gives rise to the extended normal bundle $\Nor_e (F)$ of $F$ which is the union $\Nor (F) \cup R(\Nor (F^*))$, were $R$ is the reflection $(x,u)\mapsto (x,-u)$. We extend the reach function $r$ of $F$ to the {\it outer reach function} $r_+$ on $\Nor_e (F)$ by putting $r_+(x,u)= r(x,u)$, for $(x,u)\in \Nor (F)$, and $r_+(x,u)= 0$, for $(x,u)\in R(\Nor (F^*))\setminus \Nor (F)$. Correspondingly, we define an {\it inner reach function} $r_-$ of $F$ by $r_- (x,u) = r(F^*,x,-u)$, for $(x,u) \in R(\Nor (F^*))$ and $r_- (x,u) =  0$, for $(x,u) \in \Nor (F)\setminus R(\Nor (F^*))$. The support measures $\Theta_i(F,\cdot), i=0,..., d-1,$ of $F$ can be extended to $\Nor_e(F)$ by putting
$$
\Theta_i(F,\cdot) = (-1)^{d-1-i}\Theta_i(F^*,\cdot)\circ R^{-1}
$$
on $R( \Nor (F^*))$. This definition is consistent since, on the intersection $$\Nor (F)\cap R( \Nor (F^*)),$$ we have $$\Theta_i(F,\cdot)=(-1)^{d-1-i}\Theta_i(F^*,\cdot)\circ R^{-1}$$  (see \cite[Prop. 5.1]{HLW}).

Now the following variant of the local Steiner formula \eqref{steiner1} holds,
\begin{align} \label{steiner2}
\int_{\R\setminus \dF}& f(z) \mu_d(dz )\\
&= \sum_{j=1}^{d} \binom{d-1}{j-1}\int_{\Nor_e(F)}\int_{-r_-(x,u)}^{r_+(x,u)} f(x+tu) t^{j-1} dt \Theta_{d-j}(F,d(x,u)) \nonumber
\end{align}
(see \cite[Th. 5.2]{HLW}). Since we have assumed $\mu_d(\dF)=0$, the integration on the left can be performed over the whole $\R$. Note that $\mu_d(\dF)=0$ must not even hold, if $F$ is the closure of its interior. An example is given by a Cantor-type set in $[0,1]$. As in the classical Cantor set, open intervals are deleted in each step, but such that the total length of all deleted intervals is a constant $c<1$. Let $A$ be the union of all open intervals which are deleted in even-numbered steps and $B$ the corresponding union of the intervals deleted in odd-numbered steps. $A$ and $B$ are disjoint open sets and their (common) boundary is $C=[0,1]\setminus (A\cup B)$ with $\mu_1 (C)=1-c>0$. Moreover, the  sets $A\cup C$ and $B\cup C$ are both the closure of their interior. 

The first order term in \eqref{steiner2} (with respect to $t$) involves the support measure $\Theta_{d-1}(F,\cdot)$. As it follows from \cite[Prop. 4.1]{HLW}, $\Theta_{d-1}(F,\cdot)$ is a nonnegative $\sigma$-finite measure on $\Nor_e(K)$ which, for a solid set $F$, is concentrated on the pairs $(x,u)$ with $x\in \reg_e(F) = \reg (F)\cup \reg (F^*)$ and is given by the Hausdorff measure,
\begin{equation}\label{d-1meas1}
\Theta_{d-1}(F,\cdot) = \int_{\reg_e(F)} {\bf1}\{ (x,\nu (F,x))\in\cdot\}{\cal H}^{d-1}(dx) .
\end{equation}
Here, $\nu(F,x)$ is the normal vector $u\in N(x)$, for which $(x,u)\in\Nor_e(F)$ (for $x\in\reg_e(F)$, this vector $u$ is uniquely determined).
Note that ${\cal H}^{d-1}(\dF\setminus\reg_e(F))>0$ is possible, even for solid sets $F$.
 
For (full dimensional) convex bodies $F$, formula \eqref{steiner2} reduces to Theorem 1 in \cite{KW} (here, the outer reach function $r_+$ is infinite). $F$ is then solid, all support measures are finite and nonnegative and  ${\cal H}^{d-1}$-almost all boundary points $x\in\partial F$ are regular.

\section{Definition of differentiability: The case of solid sets}

Throughout this section, we assume that $F\subset \R$ is compact and solid (hence nonempty with $\mu_d (\dF)=0$). Since the following notions and results are of a local nature, they can be generalized appropriately to unbounded solid sets $F$ using intersections with a family of growing balls.

The differentiation procedure, as it was introduced in \cite{Khm07}, lives on the {\it normal cylinder} $\Sigma = \Sigma(F)$ which, in the case of solid $F$, is defined as $\Sigma = {\mathbb R} \times \Nor_e(F)$.

For $\ve > 0$, we define the {\it local magnification map} $\tau_{\ve}$ as a mapping from $\R\setminus (S_{\dF}\cup \dF)$ to $\Sigma$ by 
$$      
\tau_{\ve}(z)  = (\frac{d(z)}{\ve},p(z),u(z)),
$$
for $z\in \R\setminus (S_F\cup F)$, and
$$      
\tau_{\ve}(z)  = (-\frac{d(z)}{\ve},p(z),-u(z)),
$$
for $z\in \R\setminus (S_{F^*}\cup F^*)$.

\begin{lem} \label{thm:pre} $ \tau_{\ve}$ is a bicontinuous one-to-one mapping from ${\R}\setminus (S_{\dF}\cup \dF )$ to 
$$
\left\{(t,x,u) : (x,u) \in \Nor_e(F), t\in (-\frac{r_-(x,u)}{\ve},0)\cup (0,\frac{r_+(x,u)}{\ve})\right\}\subset \Sigma.  
$$
\end{lem}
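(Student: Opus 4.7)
The plan is to split $\tau_\ve$ into its two natural branches, corresponding to points exterior to $F$ (where $z\in\R\setminus(S_F\cup F)$) and points interior to $F$ (where $z\in\R\setminus(S_{F^*}\cup F^*)=\inter(F)\setminus S_{F^*}$), and to verify the claim on each branch separately. These two branches are disjoint open sets whose union is $\R\setminus(S_{\dF}\cup\dF)$ and which are separated by $\dF$, so continuity across them need not be addressed.

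On the exterior branch, well-definedness and the target-set description follow directly from the setup: for any such $z$, the projection $p(z)\in\dF$ is unique, $u(z)\in N(p(z))$, so $(p(z),u(z))\in\Nor(F)\subset\Nor_e(F)$ and $z=p(z)+d(z)u(z)$, while the definition of the reach forces $d(z)<r(p(z),u(z))=r_+(p(z),u(z))$ (modulo the delicate point discussed below). The inverse on the image is the explicit map $(t,x,u)\mapsto x+t\ve u$: for $(x,u)\in\Nor(F)$ and $t\in(0,r_+(x,u)/\ve)$, the point $z:=x+t\ve u$ satisfies $0<t\ve<r(x,u)$, so by the definition of reach $p(z)=x$ uniquely, giving $z\notin S_F\cup F$, $u(z)=u$, $d(z)=t\ve$, and hence $\tau_\ve(z)=(t,x,u)$. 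For bicontinuity I would invoke two standard facts about metric projections onto closed sets, namely that $d_F$ is globally $1$-Lipschitz and that $p_F$ is continuous on $\R\setminus S_F$; combined with the identity $u(z)=(z-p(z))/d(z)$, these yield continuity of all three components of $\tau_\ve$, while the inverse $(t,x,u)\mapsto x+t\ve u$ is evidently continuous. The interior branch is handled identically after substituting $F^*$ for $F$ and composing with the reflection $R$; the identities $\Nor_e(F)=\Nor(F)\cup R(\Nor(F^*))$ and $r_-(x,u)=r(F^*,x,-u)$ ensure the two branches together cover exactly the target set in the statement, and the sign convention on $t$ encodes the orientation.

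The main delicate point is the strict inequality $d(z)<r(p(z),u(z))$, which is precisely what makes the $t$-interval in the target open rather than half-open. If equality held at some $z\notin S_F\cup F$, the defining supremum for $r$ would be attained at $z$, and ruling this out requires a genuine argument using both the continuity of $p$ on $\R\setminus S_F$ and, crucially, the solidity of $F$: one compares the tangent balls $\overline B(z,d(z))\subset\overline B(x+su,s)$ for $s>d(z)$ (both touching $F$ at $x$) and uses continuity of the projection to force the thickness of $F$ near $x$ to produce a competing nearest point at $z$ itself, contradicting $z\notin S_F$. Without solidity, this step can actually fail (a countable set accumulating at $x$ from the outward side of $u$ provides a counterexample), which is why the hypothesis of the section is needed here. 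Everything else is bookkeeping.
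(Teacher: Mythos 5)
The paper states this lemma without proof, so there is no argument of the authors' to compare against; judged on its own, your plan --- splitting into the exterior branch $\R\setminus(S_F\cup F)$ and the interior branch $\inter F\setminus S_{F^*}$, proving surjectivity and injectivity via the segment property of the reach, and getting bicontinuity from the $1$-Lipschitz property of $d_F$, the continuity of $p_F$ at points of unique projection, and the explicit inverse $(t,x,u)\mapsto x+t\ve u$ --- is the natural one, and all of that part is sound. (A small quibble: the two branches need not themselves be open, since skeletons need not be closed; what you actually need, and what is true, is that they sit inside the disjoint open sets $\R\setminus F$ and $\inter F$.)

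The genuine gap sits exactly at the point you flag as delicate, and your proposed repair does not work: solidity does \emph{not} imply the strict inequality $d(z)<r(p(z),u(z))$. Counterexample in ${\mathbb R}^2$: let $q_n=(x_n,y_n)$ with $x_n=1/n$, $y_n=\tfrac12 x_n^2(1-x_n)$, and set $F=\overline{B}((0,-1),1)\cup\bigcup_{n\ge N}\overline{B}(q_n,x_n^4)$ for $N$ large. Then $F$ is compact and solid. Since $|q_n-(0,t)|^2=t^2+x_n^2(1-t+tx_n)+y_n^2$, for every $t\in(0,1]$ each small ball stays at distance $>t$ from $(0,t)$ (because $x_n^2(1-t+tx_n)\ge x_n^3\gg 2x_n^4$), while for every fixed $t>1$ some small ball comes strictly closer to $(0,t)$ than the origin does. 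Hence $z=(0,1)$ has the \emph{unique} nearest point $x=(0,0)$ and $d(z)=1$, yet $r(x,(0,1))=1$: the supremum defining the reach is attained at a non-skeleton point, so $\tau_\ve(z)$ lands at $t=r_+(x,u)/\ve$, outside the stated open interval. Your ``tangent balls plus thickness'' argument cannot close this, because the solid mass of $F$ lies on the far side of $x$ and the competing points approach $x$ tangentially to $\partial B(z,1)$ from outside. The honest conclusion is that the image description in the lemma is correct only if one either replaces $(0,r_+(x,u)/\ve)$ by $(0,r_+(x,u)/\ve]$ (and likewise on the negative side), or accepts that the discrepancy is contained in the graph $\{(r_+(x,u)/\ve,x,u):(x,u)\in\Nor_e(F)\}$, which is $M$-null because $M=\mu_1\otimes\Theta_{d-1}(F,\cdot)$ charges no graph over the normal bundle --- and the $M$-null discrepancy is all the paper ever uses. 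You should say this rather than assert a strict inequality that is false in general.
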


In the following, we apply $\tau_\ve$ to arbitrary Borel sets $A\subset \R$,
$$
\tau_\ve(A)=\{\tau_\ve (x) : x\in A\setminus (S_{\partial F}\cup\partial F)\} .
$$
By Lemma \ref{thm:pre}, $\tau_\ve(A)$ is then a Borel set.

Now, consider a set-valued mapping $F(\ve), 0\leq  \ve \leq 1$, such that  $F(0)=F$ (we imagine all the sets $F(\ve)$ to be nonempty compact, but actually, for $\ve >0$, bounded Borel sets $F(\ve)$ would also work). It is natural to expect that a notion of differentiability of $F(\ve)$ at $F$ should be equivalent to the differentiability of
$F(\ve) \triangle F$ at  $\dF$. 

Therefore, we start with an arbitrary family $A(\ve), 0\leq \ve \leq 1,$ of  Borel sets such that $A(0)\subset \dF$. We call the family $A(\ve), 0\leq \ve \leq 1,$ {\it essentially bounded} (with bound $T$), if there is some $T>0$ such that
\begin{equation}\label{bound}
\frac{1}{\ve} \mu_d (A(\ve)\cap (\R\setminus (\dF)_{\ve T})) \to 0\quad{\rm as\ }\ve\to 0.
\end{equation}

We also need the measure $M= M_F=\mu_1\otimes \Theta_{d-1}(F,\cdot )$ on $\Sigma$.
 
\bigskip\noindent
{\bf Definition 1.} The set-valued mapping $A(\ve), 0\leq \ve \leq 1,$ is {\it differentiable} at $\dF$ for $\ve=0$, if it is essentially bounded and if there exists a Borel set $B \subset \Sigma$ such that $M(\tau_{\ve}(A(\ve))\triangle B) \to 0$, as $\ve\to 0$.  The set $B$ is then called the {\it derivative} of $A(\ve)$ at $\dF$ (for $\ve =0$).\\

\noindent
{\bf Definition 2.} The set-valued mapping $F(\ve), 0\leq \ve \leq 1,$ is {\it differentiable} at $F$ for $\ve=0$, if $A(\ve)=F(\ve)\triangle F$ is differentiable at $\dF$. The {\it derivative} of $F(\ve)$ at $F$ is then defined to be the same as the derivative of
$A(\ve)$ at $\dF$.\\
In notations
$$
 \frac{d}{d\ve}F(\ve)|_{\ve=0} = \frac{d}{d\ve}A(\ve)|_{\ve=0} = B.
$$

\medskip
Note that the set $B$ is not unique, but can be changed on a set of $M$-measure $0$. If $A(\ve)$ is differentiable at $\dF$, then  $\tilde A(\ve) = A(\ve )\cap (\dF)_{\ve T}$ is differentiable at $\dF$. We therefore
{\color{red} can} assume, without loss of generality, that $A(\ve)\subset (\dF)_{\ve T}$. Moreover, if $T$ is the bound in \eqref{bound}, we can assume 
$$
B\subset \Sigma_T = \{ (t,x,u)\in \Sigma : -T\le t\le T\} .
$$
By construction, the differentiability of $A(\ve)$ only depends on the behavior outside $\partial F$. Hence, we may also assume $A(\ve)\cap \partial F=\emptyset$, $0\le\ve\le 1$, if this is helpful. In particular, we then have $A(0)=\emptyset$. For the differentiability of $F(\ve)$ at $F$, this means that we can replace $F(\ve)\triangle F$ by $(F(\ve)\setminus F)\cup(\inter F\setminus F(\ve))$.

As a simple example, we mention the constant mapping $F(\ve) =F, 0\le\ve\le 1$. Since $A(\ve)= F\triangle F=\emptyset$ is differentiable at $\partial F$ with derivative $B=\emptyset$, $F(\ve)$ is differentiable at $F$ with derivative $\emptyset$.

\medskip
The next lemma shows some algebraic properties of the differentiation. In its formulation, for a set $C\subset\Sigma$, we put
$$
C^+=\{ (t,x,u)\in C : t\ge 0\}
$$
and
$$
C^-=\{ (t,x,u)\in C : t< 0\}.
$$
\noindent
\begin{lem}\label{thm:sets} (i) If $A_1(\ve)$ and $A_2(\ve)$ are differentiable at $\dF$ and $B_1$ and $B_2$
are corresponding derivatives, then 
$A_1(\ve)\cup A_2(\ve)$, $A_1(\ve)\setminus A_2(\ve)$ and $A_1(\ve)\cap A_2(\ve)$ are also differentiable at $\dF$ and the derivatives
are $B_1\cup B_2$, $B_1\setminus B_2$ and $B_1\cap B_2$ respectively.

(ii) If $F_1(\ve)$ is differentiable at $F$ and $A_2(\ve)$ is differentiable at $\dF$ and $B_1$ and $B_2$ are corresponding derivatives, then $F_1(\ve)\cup A_2(\ve)$ is differentiable at $F$ and the derivative is $B$ with $B^+=B_1^+\cup B_2^+$ and $B^-
=B_1^-\setminus B_2^-$. At the same time $F_1(\ve)\setminus A_2(\ve)$ is also
differentiable at $F$ and the derivative is $B$ with $B^+=B_1^+\setminus B_2^+$
and $B^-=B_1^-\cup B_2^-$.

(iii) For $a\in {\mathbb R}$ and $B\subset\Sigma$ define  $aB=\{(as,x,u): (s,x,u)\in B\}$. Let $\ve\mapsto f(\ve)$ be a non-negative function differentiable at $0$ and $f(0)=0$. If $F(\ve)$ is differentiable at $F$ with derivative $B$, then $F(f(\ve))$ is also differentiable at $F$ and the derivative is $f'(0)B$. \\
\end{lem}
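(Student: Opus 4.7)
The overall plan is to exploit the fact (Lemma \ref{thm:pre}) that $\tau_\ve$ is a bijection between its domain and its image, so that it commutes with unions, intersections and differences, and to combine this with the elementary bound that the symmetric difference of a Boolean combination of two sets is controlled by the sum of the individual symmetric differences. Throughout I would use the reductions noted after Definition 2, assuming $A_i(\ve)\subset (\dF)_{\ve T}$ and $B_i\subset\Sigma_T$ with a common bound $T$.

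For part (i), since $\tau_\ve$ is one-to-one, $\tau_\ve(A_1(\ve)\cup A_2(\ve))=\tau_\ve(A_1(\ve))\cup\tau_\ve(A_2(\ve))$ and likewise for $\cap$ and $\setminus$. For each of these three Boolean operations $\ast$, the set-theoretic inclusion
\[
(A_1\ast A_2)\triangle(B_1\ast B_2)\subset (A_1\triangle B_1)\cup(A_2\triangle B_2),
\]
together with subadditivity of $M$, reduces everything to the hypotheses $M(\tau_\ve(A_i(\ve))\triangle B_i)\to 0$. Essential boundedness of $A_1(\ve)\cup A_2(\ve)$ is additive in the two bounds, while the other two cases are dominated by the union. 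This part is entirely routine.

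For part (ii), the key observation is that $\tau_\ve$ sends points in $F^c$ to the upper half $\Sigma^+$ and points in $\inter F$ to the lower half $\Sigma^-$. Hence, writing $A_1(\ve)=F_1(\ve)\triangle F$, one has $\tau_\ve(A_1(\ve))^+=\tau_\ve(F_1(\ve)\setminus F)$ and $\tau_\ve(A_1(\ve))^-=\tau_\ve(F\setminus F_1(\ve))$, while $\tau_\ve(A_2(\ve))^+=\tau_\ve(A_2(\ve)\setminus F)$ and $\tau_\ve(A_2(\ve))^-=\tau_\ve(A_2(\ve)\cap\inter F)$. Decomposing $(F_1(\ve)\cup A_2(\ve))\triangle F$ into its exterior piece $(F_1(\ve)\setminus F)\cup(A_2(\ve)\setminus F)$ and its interior piece $(F\setminus F_1(\ve))\setminus A_2(\ve)$, and applying part (i) to the $+$- and $-$-components separately, yields the derivative $B$ with $B^+=B_1^+\cup B_2^+$ and $B^-=B_1^-\setminus B_2^-$. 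The case $F_1(\ve)\setminus A_2(\ve)$ is handled symmetrically via $(F_1(\ve)\setminus A_2(\ve))\setminus F=(F_1(\ve)\setminus F)\setminus(A_2(\ve)\setminus F)$ and $F\setminus(F_1(\ve)\setminus A_2(\ve))=(F\setminus F_1(\ve))\cup(A_2(\ve)\cap\inter F)$.

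For part (iii), the crucial identity is $\tau_\ve=\sigma_{c(\ve)}\circ\tau_{f(\ve)}$ on the common domain, where $\sigma_a(t,x,u)=(at,x,u)$ and $c(\ve)=f(\ve)/\ve\to f'(0)$, which follows at once from $d(z)/\ve=c(\ve)\cdot d(z)/f(\ve)$. Since $M=\mu_1\otimes\Theta_{d-1}(F,\cdot)$ has Lebesgue measure in the first factor, $M\circ\sigma_a^{-1}=|a|M$, so the triangle inequality gives
\[
M\bigl(\tau_\ve(A(f(\ve)))\triangle f'(0)B\bigr)\ \le\ c(\ve)\,M\bigl(\tau_{f(\ve)}(A(f(\ve)))\triangle B\bigr)+M\bigl(c(\ve)B\triangle f'(0)B\bigr),
\]
the first term vanishing by hypothesis applied along the sequence $f(\ve)\to 0$. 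I expect the main obstacle to be the second term, which is a continuity-of-scaling property for $M$: fibrewise it amounts to $\mu_1(aE\triangle bE)\to 0$ as $a\to b$ for bounded Borel $E\subset{\mathbb R}$, which is standard via $L^1$-approximation of $\mathbf{1}_E$ by step functions on finite unions of intervals (where scaling continuity is visible by inspection), and the fibrewise bound $(|a|+|b|)\mu_1(B_{x,u})$ then permits integration against $\Theta_{d-1}(F,\cdot)$ by dominated convergence, using that $B\subset\Sigma_T$ is absorbed into a set of finite $M$-measure. Essential boundedness of $F(f(\ve))$ at $F$ follows because $f(\ve)\le (f'(0)+1)\ve$ for small $\ve$, so $A(f(\ve))$ is essentially bounded with uniform bound $(f'(0)+1)T$.
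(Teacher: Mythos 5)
Your argument is self-contained, which is more than the paper offers here: the authors prove Lemma \ref{thm:sets} only by citing \cite{Khm07}, where the corresponding statement is established in the convex-body setting. Parts (i) and (ii) are correct as written. The injectivity of $\tau_\ve$ off $S_{\dF}\cup\dF$ (Lemma \ref{thm:pre}) does make $\tau_\ve$ commute with $\cup,\cap,\setminus$, the inclusion $(A_1\ast A_2)\triangle(B_1\ast B_2)\subset(A_1\triangle B_1)\cup(A_2\triangle B_2)$ handles all three operations at once, and your splitting of $(F_1(\ve)\cup A_2(\ve))\triangle F$ into the exterior piece $(F_1(\ve)\setminus F)\cup(A_2(\ve)\setminus F)$ and the interior piece $(F\setminus F_1(\ve))\setminus A_2(\ve)$, which $\tau_\ve$ sends to $\Sigma^+$ and $\Sigma^-$ respectively, reproduces exactly the asserted formulas for $B^\pm$ (and symmetrically for $F_1(\ve)\setminus A_2(\ve)$).

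In part (iii) the skeleton $\tau_\ve=\sigma_{c(\ve)}\circ\tau_{f(\ve)}$, $M\circ\sigma_a^{-1}=|a|\,M$, and the triangle inequality are all correct, but the justification of the step $M(c(\ve)B\triangle f'(0)B)\to 0$ has a genuine soft spot in the generality of this paper: $B\subset\Sigma_T$ does \emph{not} place $B$ inside a set of finite $M$-measure, because for a general solid set one can have $M(\Sigma_T)=\infty$ (this is precisely the phenomenon exploited in the example closing Section 7, where ${\cal H}^{d-1}(\dF)=\infty$). What rescues your dominated-convergence argument is that differentiability itself forces $M(B)<\infty$: assuming $A(\delta)\subset(\dF)_{\delta T}$, one has
$$
M(\tau_\delta(A(\delta)))\le\frac{1}{\delta}\int_{\Nor_e(F)}\bigl[(r_+\wedge\delta T)+(r_-\wedge\delta T)\bigr]\,\Theta_{d-1}(F,d(x,u))<\infty
$$
by \eqref{intcond} and compactness of $\dF$, whence $M(B)\le M(\tau_\delta(A(\delta)))+M(\tau_\delta(A(\delta))\triangle B)<\infty$ for a suitable $\delta$; only then is $(x,u)\mapsto(c(\ve)+f'(0))\,\mu_1(B_{x,u})$ an integrable dominating function and your fibrewise scaling-continuity argument goes through. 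You should also dispose separately of parameters with $f(\ve)=0$, where $\tau_{f(\ve)}$ is undefined: along any such sequence $\ve_n\to 0$ one has $A(f(\ve_n))=\emptyset$ and, since $f'(0)=\lim f(\ve_n)/\ve_n=0$, the target set $f'(0)B$ has $M$-measure zero, so the required convergence is trivial there. With these two repairs the proof is complete.
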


\begin{proof} See \cite[Lemma 2]{Khm07}.
\end{proof}

\medskip
Suppose $\BP$ is an absolutely continuous measure on $\R$ with density $f\ge 0$. We would like to require that  $f(z)$ can be approximated in the neighborhood
of $\dF$ by a function depending on $p_{\dF}(z)$ only.  However, it is possible that
the approximating functions are different for $z$ tending to $p_{\dF}(z)$ from outside
$F$ and from inside $F$. Hence our formal requirement is that there are two bounded measurable functions $\bar f_+\ge 0$  and $\bar f_-\ge 0$ on $\dF$, such that
\begin{align}\label{lambda}
&\frac{1}{\ve}\int_{\R} {\bf 1}\{ 0<d(F,z)\le\ve\}|f(z) - \bar f_+ (p_{F}(z))| \mu_d(dz)  
\to 0 , \cr
&\frac{1}{\ve}\int_{\R} {\bf 1}\{ 0<d(F^*,z)\le\ve\}|f(z) - \bar f_- (p_{F^*}(z))| \mu_d(dz)
\to 0 ,
\end{align}
as $\ve\to 0$. 
Now define a measure $\BQ$ on $\Sigma$ as follows:
\begin{align*}
\BQ(d(s,x,u)) &= ds \times \bar f_+(x)\Theta_{d-1}(F,d(x,u)) \; \; {\rm on} \; \; \Sigma^+, \cr
\BQ(d(s,x,u)) &= ds \times \bar f_-(x)\Theta_{d-1}(F,d(x,u)) \; \; {\rm on} \; \; \Sigma^-.
\end{align*}
Here,
$$
\Sigma^+ = \{(s,x,u)\in\Sigma : s\ge 0\},\ \Sigma^- = \{(s,x,u)\in\Sigma : s< 0\}.
$$

\noindent
\begin{thm}\label{thm:mes}  Suppose that the measure $\BP$ satisfies condition \eqref{lambda} and suppose that the functions $\bar f_-, \bar f_+$ are integrable with respect to $|\Theta_{i}|(F,\cdot ), i=0,\dots,d-1$. Let also $A(\ve)\subset (\dF)_{\ve T}$ (for some $T>0$) be a set-valued mapping which is differentiable at $\dF$ (with derivative $B\subset\Sigma_T$). 
Then
\begin{equation}\label{difmes}
\frac{d}{d\ve}\BP(A({\ve}))|_{\ve=0} = \BQ(\frac{d}{d\ve}A({\ve}) |_{\ve = 0}) = \BQ(B) .
\end{equation}
\end{thm}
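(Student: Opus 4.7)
The plan is to express $\BP(A(\ve))=\int_{A(\ve)}f\,d\mu_d$, swap $f(z)$ for the projection-dependent approximations $\bar f_+(p_F(z))$ (outer) and $\bar f_-(p_{F^*}(z))$ (inner) using the hypothesis \eqref{lambda}, and then apply the local Steiner formula \eqref{steiner2} to lift these integrals onto $\Nor_e(F)$, where the transformation $\tau_\ve$ enters by a direct substitution. Since $A(\ve)\subset(\dF)_{\ve T}$ and $\mu_d(\dF)=0$, the decomposition $A(\ve)=(A(\ve)\cap(\R\setminus F))\cup(A(\ve)\cap\mathrm{int}\,F)$ is exhaustive, and \eqref{lambda} replaces $f$ by the $\bar f_\pm$-based integrands at a cost of $o(\ve)$ after division by $\ve$.

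Applying \eqref{steiner2} in its extended form on $\Nor_e(F)$ produces, for each $j=1,\ldots,d$, a term
\[
\binom{d-1}{j-1}\int_{\Nor_e(F)}\bar f_+(x)\int_0^{r_+(x,u)}\mathbf{1}_{A(\ve)}(x+tu)\,t^{j-1}\,dt\ \Theta_{d-j}(F,d(x,u)),
\]
plus the mirror term on the inner side involving $r_-$ and $\bar f_-$. I would treat $j=1$ (the leading term) separately from the $j\ge 2$ tail. For $j=1$ I substitute $s=t/\ve$: the Jacobian $dt=\ve\,ds$ cancels the prefactor $1/\ve$, the upper limit becomes $r_+(x,u)/\ve$, and by Lemma \ref{thm:pre} one has $\mathbf{1}_{A(\ve)}(x+\ve s u)=\mathbf{1}_{\tau_\ve(A(\ve))}(s,x,u)$ on $\Sigma^+$. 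The resulting expression is the $M$-integral of $\bar f_+(x)\,\mathbf{1}_{\tau_\ve(A(\ve))}(s,x,u)\,\mathbf{1}\{s<r_+(x,u)/\ve\}$. Since $\tau_\ve(A(\ve))\subset\Sigma_T$ and $r_+(x,u)>0$ pointwise on $\Nor(F)$ (by the very definition of the normal bundle), the cutoff tends to $\mathbf{1}_{[0,T]}(s)$ as $\ve\to 0$ on the support of $\Theta_{d-1}$; combined with $M(\tau_\ve(A(\ve))\triangle B)\to 0$ and the dominating function $\bar f_+(x)\,\mathbf{1}_{[0,T]}(s)\in L^1(M)$ (which uses only $\int\bar f_+\,d\Theta_{d-1}<\infty$), dominated convergence identifies the limit as $\BQ(B^+)$. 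Symmetrically, the inner piece delivers $\BQ(B^-)$.

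For $j\ge 2$, the containment $A(\ve)\subset(\dF)_{\ve T}$ kills the integrand for $t>\ve T$, so the inner $t$-integral is bounded by $(\ve T)^j/j$, whence
\[
\frac{1}{\ve}\left|\int_{\Nor_e(F)}\bar f_+(x)\int_0^{r_+}\mathbf{1}_{A(\ve)}(x+tu)\,t^{j-1}\,dt\ \Theta_{d-j}(F,d(x,u))\right|\le\frac{\ve^{j-1}T^j}{j}\int\bar f_+\,d|\Theta_{d-j}|,
\]
which is $O(\ve^{j-1})\to 0$ by the $|\Theta_{d-j}|$-integrability of $\bar f_\pm$. The main obstacle lies precisely here: the $\Theta_i(F,\cdot)$ for $i<d-1$ are only signed $r$-measures, so I must pass to the total variation $|\Theta_{d-j}|$ and use the stated integrability assumption directly, rather than the finer condition \eqref{intcond} or any sign information. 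A smaller subtlety in the $j=1$ step is that the convergence of the cutoff $\mathbf{1}\{s<r_+/\ve\}$ to $\mathbf{1}_{[0,T]}$ uses $r_+>0$ on $\Nor(F)$, which is automatic from the definition of the normal bundle. Summing the outer, inner, and vanishing remainder contributions gives $\frac{1}{\ve}\BP(A(\ve))\to\BQ(B^+)+\BQ(B^-)=\BQ(B)$, as required.
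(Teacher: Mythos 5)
Your proposal is correct and follows essentially the same route as the paper's proof: replace $f$ by $\bar f_\pm(p(\cdot))$ at cost $o(\ve)$ via \eqref{lambda}, apply the Steiner formula \eqref{steiner2}, bound the $j\ge 2$ terms by $\ve^{j-1}T^j/j$ times the $|\Theta_{d-j}|$-integrals, and identify the $j=1$ term after the substitution $s=t/\ve$ with an $M$-integral over $\tau_\ve(A(\ve))$, concluding by the differentiability hypothesis and dominated convergence. The only cosmetic difference is that the paper packages the replacement step as an auxiliary measure $\bar\BP$ and invokes Lemma \ref{thm:sets} to pass from $A(\ve)$ to $A^\pm(\ve)$, which you do implicitly.
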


\begin{cor}\label{thm:mes2} Suppose that the conditions of the theorem hold for $A(\ve)=F(\ve)\triangle F$. Then
$$
\frac{d}{d\ve}\BP(F({\ve}))|_{\ve=0} = \BQ(\frac{d}{d\ve}A^{+}({\ve}) |_{\ve = 0})
- \BQ(\frac{d}{d\ve}A^{-}({\ve}) |_{\ve = 0}),
$$
where $A^+(\ve ) = F(\ve)\setminus F$ and $A^-(\ve ) = F\setminus F(\ve)$. 
\end{cor}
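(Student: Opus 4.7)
The plan is to decompose $F(\ve)$ into disjoint pieces that isolate its exterior and interior changes relative to $F$, and then apply Theorem~\ref{thm:mes} to each piece separately. Since $A^+(\ve) = F(\ve)\setminus F$ is disjoint from $F$ and $A^-(\ve) = F\setminus F(\ve)\subset F$, we have the disjoint decomposition
\[
F(\ve) = (F\setminus A^-(\ve))\cup A^+(\ve),
\]
so additivity of $\BP$ gives $\BP(F(\ve))-\BP(F) = \BP(A^+(\ve)) - \BP(A^-(\ve))$. Dividing by $\ve$ and letting $\ve\to 0$ reduces the problem to computing the $\BP$-derivatives of the two pieces $A^\pm(\ve)$ separately.

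To do this, I would first verify that $A^+(\ve)$ and $A^-(\ve)$ are each individually differentiable at $\dF$ with derivatives $B^+$ and $B^-$, where $B$ is the derivative of the full set $A(\ve) = A^+(\ve)\cup A^-(\ve)$. The key observation is that the sign convention in the definition of $\tau_\ve$ sends points of $\R\setminus F$ into $\Sigma^+$ and points of $\inter F$ (outside the skeleton $S_{F^*}$) into $\Sigma^-$. Combined with $A^+(\ve)\cap F = \emptyset$, $A^-(\ve)\subset F$, and $\mu_d(\dF)=0$, this yields disjoint images $\tau_\ve(A^+(\ve))\subset \Sigma^+$ and $\tau_\ve(A^-(\ve))\subset \Sigma^-$ whose union is $\tau_\ve(A(\ve))$. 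A short set-theoretic identity for symmetric differences of disjoint unions then gives
\[
M(\tau_\ve(A(\ve))\triangle B) = M(\tau_\ve(A^+(\ve))\triangle B^+) + M(\tau_\ve(A^-(\ve))\triangle B^-),
\]
and since the left-hand side tends to zero by assumption, both summands on the right do as well. Essential boundedness of $A^\pm(\ve)$ is inherited from that of $A(\ve)$ (as $A^\pm(\ve)\subset A(\ve)$), and the integrability hypotheses on $\bar f_\pm$ are conditions on $\BP$ and $F$ alone, hence transfer automatically.

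Applying Theorem~\ref{thm:mes} separately to $A^+(\ve)$ and $A^-(\ve)$ then yields
\[
\tfrac{d}{d\ve}\BP(A^\pm(\ve))\big|_{\ve=0} = \BQ(B^\pm) = \BQ\bigl(\tfrac{d}{d\ve}A^\pm(\ve)\big|_{\ve=0}\bigr),
\]
and substituting these into the identity from the first paragraph produces the claimed formula. The only mild obstacle is the clean splitting of $B$ into $B^+$ and $B^-$ in the second step; this is essentially forced by the sign convention built into $\tau_\ve$ once one knows $\mu_d(\dF)=0$, with absolute continuity of $\BP$ ensuring the analogous boundary-negligibility on the $\BP$-side.
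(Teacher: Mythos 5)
Your argument is correct and is essentially the proof the paper intends: the corollary is stated without a separate proof precisely because the proof of Theorem \ref{thm:mes} already establishes $\ve^{-1}\bar \BP(A^{\pm}(\ve))\to \BQ(B^{\pm})$ separately, and your decomposition $\BP(F(\ve))-\BP(F)=\BP(A^+(\ve))-\BP(A^-(\ve))$ together with the disjointness of $\tau_\ve(A^+(\ve))\subset\Sigma^+$ and $\tau_\ve(A^-(\ve))\subset\Sigma^-$ is exactly the mechanism used there. Your extra care in checking that $A^{\pm}(\ve)$ inherit differentiability (with derivatives $B^{\pm}$) and essential boundedness is sound and fills in what the paper leaves implicit.
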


\medskip\noindent
{\it Proof of Theorem \ref {thm:mes}.} We may assume that $T=1$.

Since $\BP(A({0}))=0$ (due to our assumption $\mu_d(\dF)=0$), we have to establish the  asymptotic behaviour of  $\ve^{-1}\BP(A({\ve}))$. 

We consider an auxiliary measure $\bar \BP$ on $(\dF)_{\ve}$ with density $\bar f_{+}(p_{\dF}(z))$, respectively $\bar f_{-}(p_{\dF}(z))$, according to $z\in F_{\ve}\setminus F$ or $z\in F^*_{\ve}\setminus F^*$. Condition \eqref{lambda} implies that $\ve^{-1}[\BP (A(\ve)) - \bar \BP (A(\ve))] \to 0$,  hence we can concentrate  on $\ve^{-1}\bar \BP (A(\ve))= \ve^{-1}\bar \BP (A^+(\ve))+\ve^{-1}\bar \BP (A^-(\ve))$, where $A^+(\ve) = A(\ve)\setminus F, A^-(\ve) = A(\ve) \cap F$. 

Since 
$$
\bar \BP (A^+(\ve))  = \int_{\R\setminus \dF} \bar f_+(p_{\dF}(z)){\bf 1}_{A^+(\ve)}(z)\, \mu_d(dz)
$$
and $z\mapsto \bar f_+(p_{\dF}(z)){\bf 1}_{A^+(\ve)}(z)$ is bounded with compact support, we can apply the local Steiner formula \eqref{steiner2}. It follows that
\begin{align}\label{asymp}
&\bar \BP (A^+(\ve))  = \int_{\Nor_e(F)}\int_{0}^{r_+(x,u)\wedge\ve} \bar f_{+}(x) {\bf 1}_{A^{+}(\ve)}(x+tu) dt \, \Theta_{d-1}(F,d(x,u))\\
&+ \sum_{j=2}^{d} {\binom{d-1}{j-1}} \int_{\Nor_e(F)}\int_{0}^{r_+(x,u)\wedge\ve} \bar f_{+}(x) {\bf 1}_{A^{+}(\ve)}(x+tu) t^{j-1} dt \, \Theta_{d-j}(F,d(x,u)).\nonumber 
\end{align}
The sum of the higher order terms is $o(\ve)$. Indeed, for each integral we have
\begin{align*}
&\left|\int_{\Nor_e(F)}\int_{0}^{r_+(x,u)\wedge\ve} \bar f_{+}(x) {\bf 1}_{A^{+}(\ve)}(x+tu) t^{j-1} dt \, \Theta_{d-j}(F,d(x,u))\right|\cr
&\quad\leq \int_{\Nor_e(F)}\bar f_{+}(x)\left(\int_{0}^{r_+(x,u)\wedge\ve}  {\bf 1}_{A^{+}(\ve)}(x+tu) t^{j-1} dt\right) \, |\Theta_{d-j}|(F,d(x,u))\cr
&\quad\le \frac{\ve^{j}}{j}\int_{\Nor_e(F)}\bar f_{+}(x)\, |\Theta_{d-j}|(F,d(x,u))
\end{align*}
with $j\ge 2$, 
and the latter integral is finite, by our assumptions.

 As to the asymptotic behaviour of the first summand in \eqref{asymp}, we have
\begin{align*}
\frac{1}{\ve}&\int_{\Nor_e(F)}\int_{0}^{r_+(x,u)\wedge\ve} \bar f_{+}(x) {\bf 1}_{A^{+}(\ve)}(x+tu) dt \, \Theta_{d-1}(F,d(x,u))\cr
 &=  \int_{\Sigma} {\bf 1}\{0\le t\le \frac{r_+(x,u)}{\ve}\wedge 1\}\ \bar f_{+}(x) {\bf 1}_{B^{+}(\ve)}(t,x,u)M(d(t,x,u)) 
\end{align*}
with $B^{+}(\ve) = \tau_\ve (A^{+}(\ve))$.
However,
the differentiability of $A(\ve)$ implies that of $A^+(\ve)$ (with limit $B^+$) by Lemma \ref{thm:sets}. Therefore, the function
$|{\bf 1}_{B^{+}(\ve)}(t,x,u) - {\bf 1}_{B^{+}}(t,x,u)|$ tends to $0$ $M-$a.e. on $\Sigma$ and Lebesgue's theorem of majorised convergence implies that
$$
\int_{\Sigma} {\bf 1}\{0\le t\le \frac{r_+(x,u)}{\ve}\wedge 1\}\ \bar f_{+}(x) |{\bf 1}_{B^{+}(\ve)}(t,x,u) - {\bf 1}_{B^{+}}(t,x,u)| M(d(t,x,u))
$$
tends to $0$, as $\ve\to 0$.
This shows that
$$
\frac{1}{\ve}\bar \BP (A^+(\ve))\to \int_{\Sigma} {\bf 1}\{0\le t\le  1\}\ \bar f_{+}(x) {\bf 1}_{B^{+}}(t,x,u)M(d(t,x,u)) .
$$

With respect to $A^-(\ve)$, we can proceed similarly, since
\begin{align*}
\bar \BP (A^-(\ve))  &= \int_{A(\ve)\cap F} {\bf 1}_{A^-(\ve)}(z)\, \bar \BP(dz)\cr
&= \int_{\R\setminus \dF} \bar f_-(p_{\dF}(z)){\bf 1}_{A^-(\ve)}(z)\, \mu_d(dz),
\end{align*}
again due to the assumption that $\mu_d(\dF)=0$. Hence the Steiner formula \eqref{steiner2} can be used again and gives us, as above,
$$
\frac{1}{\ve}\bar \BP (A^-(\ve))\to  \int_{\Sigma} {\bf 1}\{-1\le t\le  0\}\ \bar f_{-}(x) {\bf 1}_{B^{-}}(t,x,u)M(d(t,x,u)) ,
$$
hence
$$
\frac{1}{\ve}\bar \BP (A(\ve))\to \int_{\Sigma}  {\bf 1}_{B}(t,x,u){\mathbb Q}(d(t,x,u)) = {\mathbb Q}(B) ,
$$
since $B\subset \Sigma_1$. 
\qed

\bigskip\noindent
{\bf Remark.} In the theorem, we have assumed that the functions $\bar f_-, \bar f_+$ are integrable with respect to $|\Theta_{i}|(F,\cdot ),$ for $i=0,\dots,d-1$. For $i=0,..., d-2$, an easier condition, which is sufficient for \eqref{difmes}, is that these functions are integrable with respect to the measures $(r_{-}(\cdot)\wedge 1)^{d-i-1} |\Theta_{i}|(F,\cdot)$ and $(r_{+}(\cdot)\wedge 1)^{d-i-1} |\Theta_{i}|(F,\cdot)$, respectively.  This can be easily seen from the proof.


\section{Boundary sets}

As a second class of sets $F\subset \R$, we now study {\it boundary sets}. These are nonempty compact sets $F$ without interior points, hence $F=\dF$, and with $\mu_d(F)=0$. 
Again, notations and results can be generalized appropriately to unbounded closed sets $F$. 

Since $F^*=\R$, we need no extension of the normal bundle $\Nor (F)$ or the reach function $r$ and will use the Steiner formula \eqref{steiner1}. The regular points $x\in\dF$ can have one normal $\nu (F,x)$ (then $(x,-\nu (F,x))\notin \Nor (F)$) or two antipodal normals $\nu (F,x), -\nu(F,x)$ (here, we define $\nu(F,x)$ in some measurable way). The support measure $\Theta_{d-1}(F,\cdot)$ satisfies
\begin{align}\label{d-1meas2}
\Theta_{d-1}(F,\cdot) = \int_{\reg (F)} [{\bf1}\{ (x,\nu (F,x))\in\cdot\}+{\bf1}\{ (x,-\nu (F,x))\in\cdot\}]{\cal H}^{d-1}(dx) 
\end{align}
(see \cite[Prop. 4.1]{HLW}).
The {\it normal cylinder} $\Sigma = \Sigma(F)$ is then given by $\Sigma = {\mathbb R} \times \Nor (F)$. Note that the considerations in this section make sense for boundary sets $F$ with ${\cal H}^{d-1} (F)=0$ (e.g. for line segments in ${\mathbb R}^3$). Then $\reg (F) =\emptyset$ and $\Theta_{d-1}(F,\cdot) =0$, which implies that the following results are not very interesting for such sets.

The {\it local magnification map} $\tau_{\ve}$, 
$$      
\tau_{\ve}(z)  = (\frac{d(z)}{\ve},p(z),u(z)),
$$
is now defined for $z\in \R\setminus (S_F\cup F)$, and
is bicontinuous and one-to-one with image 
$$
\left\{(t,x,u) : (x,u) \in \Nor(F), t\in (0,\frac{r(x)}{\ve})\right\}\subset \Sigma.
$$

The further notations, definitions and results from Section 3 (up to and including Lemma 3) now carry over to our new situation either word-by-word or with the obvious changes. Since $F=\dF$, we now have only one notion of differentiability. Namely,  the set-valued mapping $F(\ve), 0\leq \ve \leq 1,$ (with $F(0)\subset F$) is called {\it differentiable} at $F$ for $\ve=0$, if it is essentially bounded, in the sense that
\begin{equation*}
\frac{1}{\ve} \mu_d (F(\ve)\cap (\R\setminus (F)_{\ve T})) \to 0\quad{\rm as\ }\ve\to 0,
\end{equation*}
for some $T>0$, and if there exists a Borel set $B \subset \Sigma$ (the {\it derivative}) such that $M(\tau_{\ve}(F(\ve))\triangle B) \to 0$, as $\ve\to 0$. Here, we can replace $F(\ve)$ by $\tilde F(\ve) =F(\ve)\setminus F$ (thus $\tilde F(0)=\emptyset$) without changing the derivative. Since the set $F$ has no interior normals, the derivative $B$ is automatically contained in the upper part $\Sigma^+$ of $\Sigma$.

It is important, for the understanding, to see the connection between the notion of differentiability considered in this section with the one of the previous section, in the case where $F=\dF$ is the boundary $F=\partial G$ of a solid set $G$. It is easily seen, that a family $F(\ve)$ which is differentiable at $F$ is then differentiable at $G$ and vice versa. The derivatives $B$ at $F$ and $C$ at $G$ are formally different, since $C$ sits in the cylinder $\Sigma (G)$ and may consist of two parts $C^+$ and $C^-$, whereas $B$ sits in the cylinder $\Sigma (F)$ and satisfies $B=B^+$. They can, however, be easily transformed into each other. Each point $(x,u)\in \Nor_e(G)$ is represented in $\Nor (F)$ by two points $(x,u)$ and $(x,-u)$. The half cylinder $\Sigma^+(G)$ is mapped to $\Sigma (F)$ by the identity map, $(s,x,u)\mapsto (s,x,u)$, $(x,u)\in \Nor_e (G), s\ge 0$. The half cylinder $\Sigma^-(G)$ is mapped to (a different part of) $\Sigma (F)$ by the reflection $(s,x,u)\mapsto (-s,x,-u)$, $(x,u)\in \Nor_e(G), s<0$. In this way, $B_1=C^+$ is already a subset of $\Sigma (F)$ whereas $C^-$ is mapped to a set $B_2\subset \Sigma (F)$. Then, we have $B=B_1\cup B_2$, and this is a disjoint union!

We now continue with a result corresponding to Theorem 4.

\medskip
Let  $\BP$ be an absolutely continuous measure on $\R$ with density $f\ge 0$. We assume that there is a bounded measurable function $\bar f \ge 0$ on $F$, such that
\begin{align}\label{lambda2}
&\frac{1}{\ve}\int_{\R} {\bf 1}\{ d(F,z)\le\ve\}|f(z) - \bar f (p_{F}(z))| \mu_d(dz)  
\to 0 , 
\end{align}
as $\ve\to 0$, and define the measure $\BQ$ on $\Sigma$ by 
\begin{align*}
\BQ(d(s,x,u)) &=  ds \times \bar f(x)\Theta_{d-1}(F,d(x,u)) .
\end{align*}

\begin{thm}\label{thm:mes3}  Suppose that the measure $\BP$ satisfies condition \eqref{lambda2} and suppose that the function $\bar f$ is integrable with respect to $|\Theta_{i}|(F,\cdot ), i=0,\dots,d-1$. Let also $F(\ve)\subset (F)_{\ve T}$ (for some $T>0$) be a set-valued mapping which is differentiable at $F$ (with derivative $B\subset\Sigma_T$). 
Then
$$
\frac{d}{d\ve}\BP(F({\ve}))|_{\ve=0} = \BQ(\frac{d}{d\ve}F({\ve}) |_{\ve = 0}) = \BQ(B) .
$$
\end{thm}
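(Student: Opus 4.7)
The plan is to transplant the proof of Theorem \ref{thm:mes} to the boundary-set setting, where the simplification is that $F^*=\R$ carries nothing, so only the ``outer'' half of the argument survives: the Steiner formula \eqref{steiner1} replaces \eqref{steiner2}, and the measure $\BQ$ is concentrated in $\Sigma^+$.

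First I would reduce to $T=1$ and replace $F(\ve)$ by $\tilde F(\ve)=F(\ve)\setminus F$; this changes neither the derivative nor $\BP(F(\ve))$, because $\mu_d(F)=0$ gives $\BP(F(0))=0$. The task is thus to evaluate $\lim_{\ve\to 0}\ve^{-1}\BP(F(\ve))$. I would introduce the auxiliary measure $\bar\BP$ on $F_\ve\setminus F$ with density $z\mapsto \bar f(p_F(z))$. Hypothesis \eqref{lambda2} gives $\ve^{-1}|\BP(F(\ve))-\bar\BP(F(\ve))|\to 0$, so the problem reduces to the asymptotics of $\ve^{-1}\bar\BP(F(\ve))$.

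Next, applying Theorem \ref{SF} to the bounded, compactly supported integrand $\bar f(p_F(\cdot))\,\mathbf{1}_{F(\ve)}(\cdot)$, and noting that $F(\ve)\subset F_\ve$ forces the inner $t$-integration to end at $r(x,u)\wedge\ve$, yields
$$
\bar\BP(F(\ve))=\sum_{j=1}^{d}\binom{d-1}{j-1}\int_{\Nor(F)}\int_{0}^{r(x,u)\wedge\ve}\bar f(x)\,\mathbf{1}_{F(\ve)}(x+tu)\,t^{j-1}\,dt\,\Theta_{d-j}(F,d(x,u)).
$$
Exactly as in Theorem \ref{thm:mes}, the higher-order terms ($j\ge 2$) are $o(\ve)$: estimating $t^{j-1}\le\ve^{j-1}$ on $[0,\ve]$ and passing to $|\Theta_{d-j}|$ bounds the $j$-th summand by $(\ve^j/j)\int_{\Nor(F)}\bar f(x)\,|\Theta_{d-j}|(F,d(x,u))$, which is finite by hypothesis.

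For the leading term ($j=1$), the change of variable $s=t/\ve$ rewrites it as
$$
\ve\int_{\Sigma}\mathbf{1}\!\left\{0\le s\le \tfrac{r(x,u)}{\ve}\wedge 1\right\}\bar f(x)\,\mathbf{1}_{\tau_\ve(F(\ve))}(s,x,u)\,M(d(s,x,u)).
$$
By the differentiability hypothesis, $M(\tau_\ve(F(\ve))\triangle B)\to 0$, so $\mathbf{1}_{\tau_\ve(F(\ve))}\to\mathbf{1}_B$ in $L^1(M)$; since $B\subset\Sigma_1$, the function $\mathbf{1}_{[0,1]}(s)\bar f(x)$ is an $M$-integrable dominator (here one uses the assumed integrability of $\bar f$ against $\Theta_{d-1}(F,\cdot)$). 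Dominated convergence then delivers
$$
\frac{1}{\ve}\bar\BP(F(\ve))\longrightarrow\int_{\Sigma}\mathbf{1}_B(s,x,u)\,\bar f(x)\,M(d(s,x,u))=\BQ(B),
$$
which is the claim. I do not anticipate a genuine obstacle, since this is a strict specialisation of the argument of Theorem \ref{thm:mes} with the inner piece ($A^-(\ve)$) excised; the only point worth watching is the invocation of dominated convergence in the leading term, which requires simultaneously that $B$ lie in the bounded strip $\Sigma_1$ and that $\bar f$ be $\Theta_{d-1}(F,\cdot)$-integrable.
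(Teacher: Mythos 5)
Your proposal is correct and follows essentially the same route as the paper's own proof: reduction to $T=1$, the auxiliary measure $\bar\BP$ with density $\bar f(p_F(\cdot))$, the Steiner formula \eqref{steiner1}, the $o(\ve)$ bound on the $j\ge 2$ terms via $|\Theta_{d-j}|$, and dominated convergence for the leading term. The only cosmetic difference is that you invoke the $L^1(M)$ convergence of the indicators directly from $M(\tau_\ve(F(\ve))\triangle B)\to 0$, where the paper phrases it as $M$-a.e.\ convergence plus majorised convergence; both yield the same conclusion.
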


\begin{proof} We may assume that $T=1$.

Since $\BP(F({0}))=0$ (due to our assumption $\mu_d(F)=0$), we have to establish the  asymptotic behaviour of  $\ve^{-1}\BP(F({\ve}))$. 

Again, we consider the auxiliary measure $\bar \BP$ on $F_{\ve}$ with density $z\mapsto\bar f (p_{F}(z))$. Condition \eqref{lambda2} implies that $\ve^{-1}[\BP (F(\ve)) - \bar \BP (F(\ve))] \to 0$,  hence we can concentrate  on $\ve^{-1}\bar \BP (F(\ve))$. 

Since 
$$
\bar \BP (F(\ve))  = \int_{\R} \bar f(p_{F}(z)){\bf 1}_{F(\ve)}(z)\, \mu_d(dz)
$$
and $z\mapsto \bar f(p_{F}(z)){\bf 1}_{F(\ve)}(z)$ is bounded with compact support, we can apply the local Steiner formula \eqref{steiner1}.  It follows that 
\begin{align}\label{asymp2}
&\frac{1}{\ve}\bar \BP (F(\ve))  = \frac{1}{\ve}\int_{\Nor(F)}\int_{0}^{r(x,u)\wedge\ve} \bar f(x) {\bf 1}_{F(\ve)}(x+tu) dt \, \Theta_{d-1}(d(x,u))\\
&\ + \sum_{j=2}^{d} \frac{1}{\ve}\binom{d-1}{j-1} \int_{\Nor(F)}\int_{0}^{r(x,u)\wedge\ve} \bar f(x) {\bf 1}_{F(\ve)}(x+tu) t^{j-1} dt \, \Theta_{d-j}(d(x,u)).\nonumber
\end{align}
Again, the sum of the higher order terms vanishes asymptotically, since
\begin{align*}
&\left|\int_{\Nor(F)}\int_{0}^{r(x,u)\wedge\ve} \bar f (x) {\bf 1}_{F(\ve)}(x+tu) t^{j-1} dt \, \Theta_{d-j}(d(x,u))\right|\cr
&\quad\leq \int_{\Nor(F)}\bar f(x)\left(\int_{0}^{r(x,u)\wedge\ve}  {\bf 1}_{F(\ve)}(x+tu) t^{j-1} dt\right) \, |\Theta_{d-j}|(d(x,u))\cr
&\quad\le \frac{\ve^{j}}{j}\int_{\Nor(F)}\bar f(x)\, |\Theta_{d-j}|(d(x,u))
\end{align*}
with $j\ge 2$, 
and the latter integral is finite, by our assumptions.

For the first summand in \eqref{asymp2}, we have
\begin{align*}
\frac{1}{\ve}&\int_{\Nor(F)}\int_{0}^{r(x,u)\wedge\ve} \bar f(x) {\bf 1}_{F(\ve)}(x+tu) dt \, \Theta_{d-1}(d(x,u))\cr
 &=  \int_{\Sigma} {\bf 1}\{0\le t\le \frac{r(x,u)}{\ve}\wedge 1\}\ \bar f(x) {\bf 1}_{B(\ve)}(t,x,u)M(d(t,x,u)) 
\end{align*}
with $B(\ve) = \tau_\ve (F(\ve)\setminus F)$.

Since the function
$|{\bf 1}_{B(\ve)}(t,x,u) - {\bf 1}_{B}(t,x,u)|$ tends to $0$ $M-$a.e. on $\Sigma$, Lebesgue's theorem of majorised convergence implies that
$$
\int_{\Sigma} {\bf 1}\{0\le t\le \frac{r(x,u)}{\ve}\wedge 1\}\ \bar f (x) |{\bf 1}_{B (\ve)}(t,x,u) - {\bf 1}_{B}(t,x,u)| M(d(t,x,u))
$$
tends to $0$, as $\ve\to 0$.
This shows that
$$
\frac{1}{\ve}\bar \BP (F(\ve))\to \int_{\Sigma} {\bf 1}\{0\le t\le  1\}\ \bar f(x) {\bf 1}_{B}(t,x,u)M(d(t,x,u)) ,
$$
hence
$$
\frac{1}{\ve}\bar \BP (F(\ve))\to \int_{\Sigma}  {\bf 1}_{B}(t,x,u){\mathbb Q}(d(t,x,u)) = {\mathbb Q}(B) ,
$$
since $B\subset \Sigma_1$. 
\end{proof}

\bigskip\noindent
{\bf Remark.} Similarly as in the last section (see the remark after Theorem \ref{thm:mes}), the integrability conditions on $\bar f$ can be relaxed.

\section{Set functions with bifurcation}

Motivated by possible applications, we now consider a special situation of a family $F(\ve),0\le\ve\le 1$, which is a finite union 
$$
F(\ve) = \bigcup_{i=1}^N F_i(\ve)
$$
of families $F_i(\ve), 0\le\ve\le 1,$ of compact sets which, for $\ve>0$, are pairwise disjoint, that is $ F_i(\ve)\cap F_j(\ve)=\emptyset$, if $i\not= j$. Assume that the sets $F_i=F_i(0)$ are solid and that their interiors are pairwise disjoint. It is then easy to see, that
$$
F=F(0)=\bigcup_{i=1}^N{F_i}$$
is a solid set. If the sets $F_i$ themselves  are pairwise disjoint, then we can consider the families $F_i(\ve)$ individually and are back in the situation of Section 3. The more interesting situation occurs, if  there are non-empty boundary parts  $C_i = \dF_i\setminus \dF$ of $F_i$ in $F=F(0)=\bigcup_{i=1}^NF_i$. These sets $C_i$ may then be interpreted as bifurcation surfaces (or cracks) which arise in $F$ as a result of the evolution in $\ve$. Notice that each point $x\in C_i$ also lies in $C_j$, for some $j\not= i$ (or even in more than two sets $C_i$). We put $C_{ij}=C_i\cap C_j$, for $i\not= j$. Our boundary set of interest is then
$$
C = \bigcup_{i=1}^N \dF_i = \dF \cup \bigcup_{1\le i <j\le N} C_{ij} .
$$

Let us assume now that $F_i(\ve)$ is differentiable at $F_i$ with derivative $B_i$, for $i=1,...,N$. Is then $F(\ve)\triangle F$ differentiable at $C$? And, if ``yes'', what is the derivative? The following example (for $N=2$) shows that we cannot expect a positive answer without further assumptions. In the example, we have $F=F_1\cup F_2$ and $\dF=\dF_1\cup\dF_2$, thus $C_{12}=\emptyset$ which makes the calculation simpler. A corresponding example with $C_{12}\not=\emptyset$ can be easily obtained by adding sets $\tilde F_1,\tilde F_2$ to $F_1,F_2$, disjoint from $F(\ve)$ and such that the corresponding set $\tilde C_{12}$ is nonempty.\\

\noindent
{\bf Example.} Let  $a_k, k=1,2,\dots,$ be a monotone sequence, which converges to zero, and let $b_k=(a_k+a_{k+1})/2$. Consider
$$F_1 = \left(\bigcup_{k=1}^\infty \left[b_k, a_k \right]\cup \{0\}\right)\times [0,1] ,\ F_2 = [-1,0]\times [0,1], $$
both as subsets of ${\mathbb R}^2$. Both sets, $F_1$ and $F_2$, are solid and the joint boundary part $\dF_1\cap\dF_2$ is the segment $S = \{0\}\times [0,1]$.
Let $F_1(\ve)=F_1$ and $F_2(\ve)=[-1,\ve]\times[0,1]$. Then, $F_1(\ve)$ is differentiable at $F_1$ with derivative $\emptyset$ and $F_2(\ve)$ is differentiable at $F_2$ with derivative $$B=\{(t,x,u)\in\Sigma (F_2) : 0\le t\le 1, x\in S, u=(1,0)\} .$$

\begin{figure} 
\begin{center}
\includegraphics[scale=0.65]{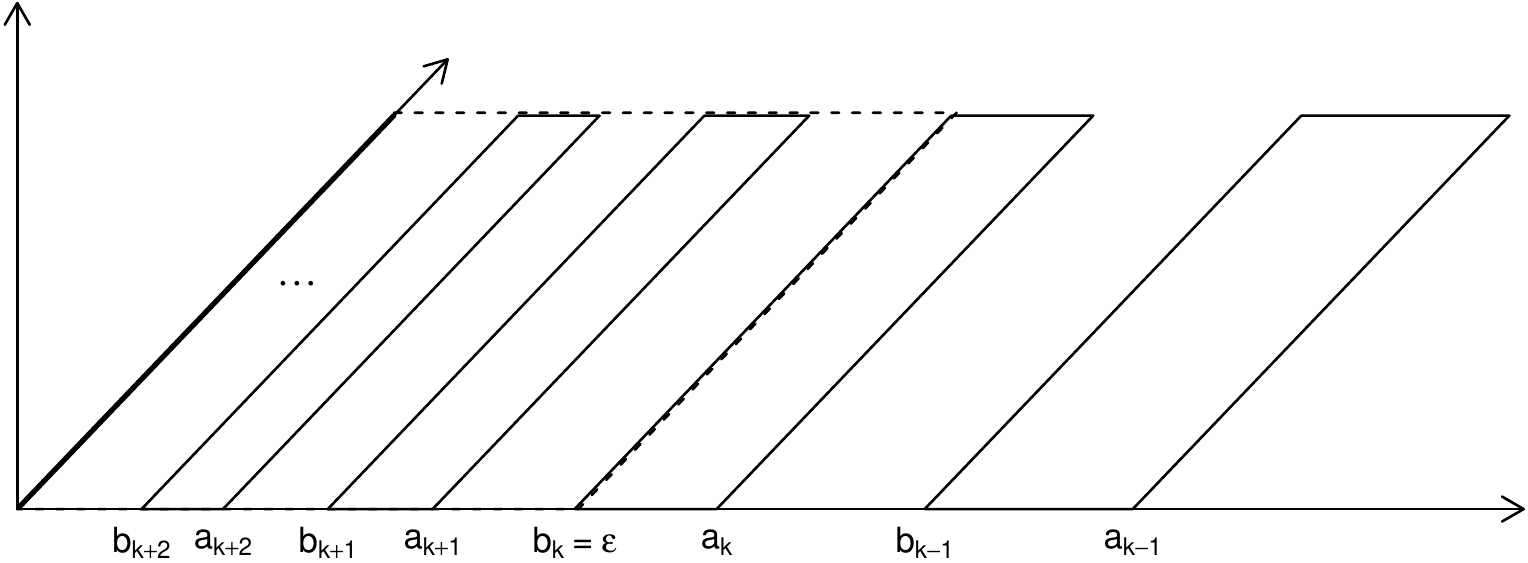}
\caption{The set $F_1$ along with $A(\ve)=(0,\ve]\times[0,1]$ (dotted line)}
\label{fig:SD1} 
\end{center} 
\end{figure}

We show that $A(\ve) = F(\ve)\triangle F = (0,\ve]\times[0,1]\setminus F_1$ is not differentiable at $\dF$. In fact, all points of $A(\ve)$ project onto $\dF_1$ and therefore, with respect to $A(\ve)$, the local magnification map $\tau_{\ve}$ of $C=\dF$ is the same as the local magnification map $\tau_{\ve}^{(1)}$ of $\dF_1$.  
For $\ve = b_k$ and $u_0=(0,1)$, we therefore get
\begin{align*}
B(\ve) &= \tau_{\ve}(A(\ve)) = \tau_{\ve}^{(1)}((0,\ve]\times[0,1]\setminus F_1)\cr
&=\bigcup_{i=k}^\infty\left\{ (t,x,u) : t\in (0,\frac{b_i-a_{i+1}}{2 b_i}), x\in \{a_{i+1}\}\times [0,1], u=u_0\right\}\cr
&\cup\bigcup_{i=k}^\infty\left\{ (t,x,u) : t\in (0,\frac{a_i-b_i}{2b_i}), x\in \{b_i\}\times [0,1], u=-u_0\right\}.
{\color{red} /2b_k}
\end{align*}

\begin{figure} 
\begin{center}
\includegraphics[scale=0.65]{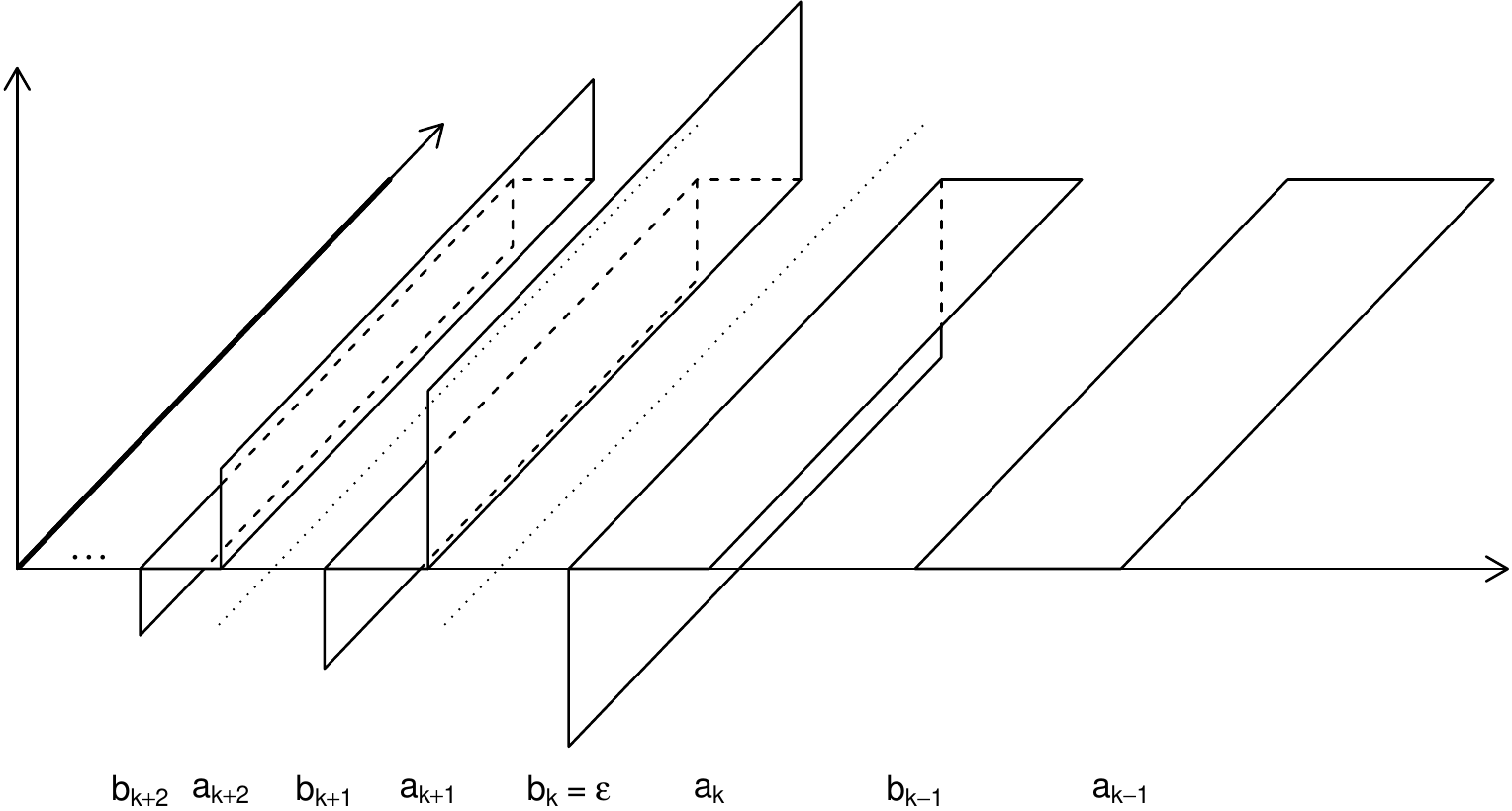}
\caption{An illustration of the form of $B(\ve)$}
\label{fig:SD2} 
\end{center} 
\end{figure}

The measure $M$ of this set remains strictly positive,
$$
M(B(\ve)) = \sum_{i=k}^\infty \frac{a_i-a_{i+1}}{2b_k} =\frac{a_k}{2b_k} = \frac{a_k}{a_k+a_{k+1}} \in [1/2, 1],
$$
whereas the set $B(\ve)$ itself shrinks to a subset of $B_0 = \left([0,1/4]\times S\times \{u_0\}\right)\cup  \left( [0,1/4]\times S\times \{-u_0\}\right)$. Notice that $B_0\cap\Sigma$ is empty, since $u_0$ and $-u_0$ are not normals of $F$ at $x\in S$. Hence, there cannot be a set $B\subset\Sigma$ with $M(B(\ve)\triangle B)\to 0$ and therefore $A(\ve)$ is not differentiable at $\dF$.

\bigskip
The additional restrictions, which we have to impose on the sets $F_i, i=1,...,N,$ and the proof of the differentiability result for the union set $F=\bigcup_{i=1}^NF_i$ becomes a bit technical, for general $N$. We therefore concentrate now on the case $N=2$, but the general case can be treated in a similar way. \\

\noindent
{\bf Definition 3.} Let $F_1,F_2$ be solids sets. We say that $F_1,F_2$ provide a {\it normal} decomposition of the solid set $F=F_1\cup F_2$, if $F_1$ and $F_2$ have only boundary points in common and if
\begin{equation}\label{newcond}
\frac{1}{\ve}\mu_d\left( (\dF)_{\ve}\cap (\dF_1)_\ve \cap (\dF_2)_{\ve}\right)\to 0
\end{equation}
holds, as $\ve\to 0$.

\begin{figure} 
\begin{center}
\includegraphics[scale=0.65]{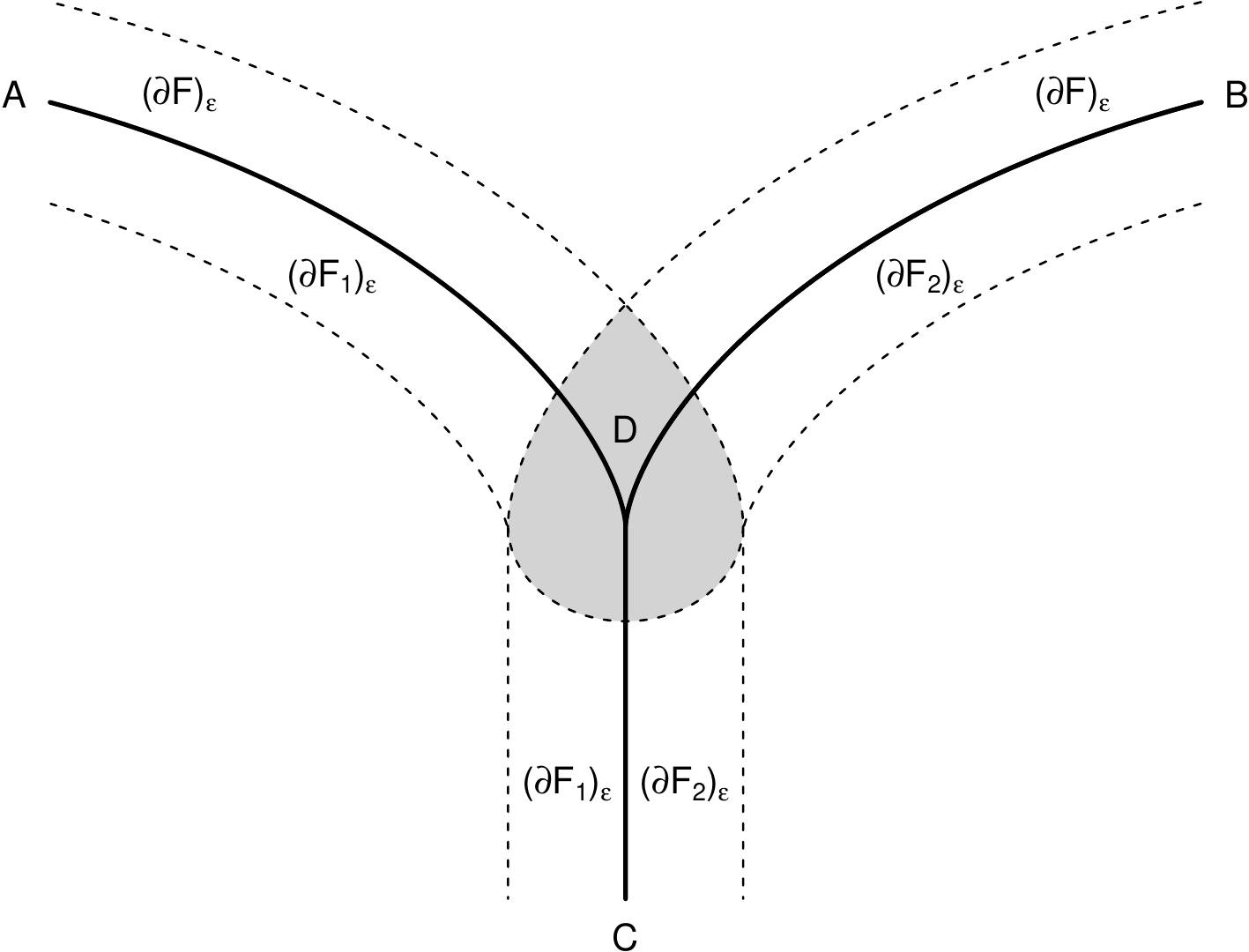}
\caption{The shaded set illustrates condition \eqref{newcond}. The curve ADC is part of $\dF_1$, BDC is part of $\dF_2$, while ADB is part of $\dF$.}
\label{fig:EN} 
\end{center} 
\end{figure}

\bigskip
\begin{lem} Suppose $F_1,F_2$ yield a normal decomposition of $F=F_1\cup F_2$. Then  
\begin{equation}\label{boundpoints}
\Theta_{d-1} ( C,\{ (x,u)\in \Nor (C) : x\in \dF\cap \dF_1\cap\dF_2 \} )= 0.
\end{equation}
Moreover, we have
\begin{equation}\label{boundcond}
\frac{1}{\ve}\mu_d\left( (\dF_1\triangle\dF_2)_{\ve}\cap (\dF_1\cap \dF_2)_{\ve}\right)\to 0
\end{equation}
and
\begin{equation}\label{boundcond2}
\frac{1}{\ve}\mu_d\left( ((\dF_1)_\ve\cap(\dF_2)_{\ve})\setminus (\dF_1\cap \dF_2)_{\ve}\right)\to 0,
\end{equation}
as $\ve\to 0$.
\end{lem}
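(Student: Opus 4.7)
The plan hinges on a single set-theoretic observation: $\partial F_1 \triangle \partial F_2 \subseteq \partial F$. If $x \in \partial F_1 \setminus \partial F_2$, then the hypothesis that $F_1 \cap F_2$ consists only of boundary points of both sets rules out $x \in \inter F_2$ (it would force $x \in \partial F_2$); combined with $x \notin \partial F_2$, this puts $x$ into a neighborhood disjoint from $F_2$, where $F = F_1$, so $x \in \partial F$. The symmetric case is analogous.

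Both \eqref{boundcond} and \eqref{boundcond2} then reduce to \eqref{newcond} via elementary inclusions. For \eqref{boundcond}, the observation above gives $(\partial F_1 \triangle \partial F_2)_\ve \subseteq (\partial F)_\ve$, while $(\partial F_1 \cap \partial F_2)_\ve \subseteq (\partial F_1)_\ve \cap (\partial F_2)_\ve$; the left-hand set therefore lies in $(\partial F)_\ve \cap (\partial F_1)_\ve \cap (\partial F_2)_\ve$. For \eqref{boundcond2}, any $z$ on the left-hand side has some $y \in \partial F_1$ with $\|z - y\| \leq \ve$; the condition $d(z, \partial F_1 \cap \partial F_2) > \ve$ rules out $y \in \partial F_2$, so $y \in \partial F_1 \setminus \partial F_2 \subseteq \partial F$ and $d(z, \partial F) \leq \ve$, again embedding $z$ in the triple $\ve$-neighborhood.

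For \eqref{boundpoints}, set $X := \partial F \cap \partial F_1 \cap \partial F_2$. Then $X_\ve \subseteq (\partial F)_\ve \cap (\partial F_1)_\ve \cap (\partial F_2)_\ve$ by the triangle inequality, so \eqref{newcond} gives $\mu_d(X_\ve) = o(\ve)$. I would apply the local Steiner formula (Theorem~\ref{SF}) to the boundary set $C = \partial F_1 \cup \partial F_2$ with test function $\mathbf{1}_{X_\ve \cap C_\ve}$ (which agrees with $\mathbf{1}_{X_\ve}$ up to a $\mu_d$-null set, since $X \subseteq C$ yields $X_\ve \subseteq C_\ve$). This expands $\mu_d(X_\ve)$ as $\sum_{j=1}^d \binom{d-1}{j-1} I_j(\ve)$ with $I_j(\ve) = \int \int_0^{r(x,u)} \mathbf{1}_{X_\ve \cap C_\ve}(x+tu)\, t^{j-1} dt\, \Theta_{d-j}(C, d(x,u))$. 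The integrand vanishes unless $t \leq \ve$ (from $x + tu \in C_\ve$) and $d(x, X) \leq 2\ve$ (from $x + tu \in X_\ve$ and the triangle inequality). Since $\Theta_{d-1}(C, \cdot) \geq 0$ and for $x \in X$, $t \in [0, \ve \wedge r(x,u)]$ automatically gives $x + tu \in X_\ve$, we obtain $I_1(\ve) \geq \int_{\{x \in X\}} (r(x,u) \wedge \ve)\, \Theta_{d-1}(C, d(x,u))$. Provided $I_j(\ve)/\ve \to 0$ for $j \geq 2$, dividing the Steiner identity by $\ve$ and using monotone convergence $(r \wedge \ve)/\ve \uparrow 1$ forces $\Theta_{d-1}(C, \{x \in X\} \cap \Nor(C)) = 0$.

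The principal technical obstacle is showing $I_j(\ve)/\ve \to 0$ for $j \geq 2$. One has the crude bound $|I_j(\ve)| \leq \frac{1}{j}\int_{\{d(x,X) \leq 2\ve\}}(r \wedge \ve)^j\, d|\Theta_{d-j}|(C,\cdot)$, and although the pointwise quotient $(r \wedge \ve)^j/\ve$ is bounded by $(r \wedge 1)^{j-1}$ for $\ve \leq 1$, this majorant need not be $|\Theta_{d-j}|$-integrable---condition \eqref{intcond} supplies only $(r \wedge 1)^j$-integrability. I would resolve this by splitting $\Nor(C) = \{r \geq \ve_0\} \cup \{r < \ve_0\}$: on the first piece the $r$-bounded measure is finite and the explicit factor $\ve^{j-1}$ gives an $O(\ve^{j-1}) = o(1)$ contribution after division by $\ve$; on the second, \eqref{intcond} together with the shrinking domain $\{d(x,X) \leq 2\ve\}$ gives a uniform bound that vanishes with $\ve_0$. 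Taking $\ve \to 0$ first and then $\ve_0 \to 0$ completes the argument.
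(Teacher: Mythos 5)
Your treatment of \eqref{boundcond} and \eqref{boundcond2} is exactly the paper's argument (the inclusion $\dF_1\triangle\dF_2\subseteq\dF$ plus the two elementary inclusions that place both sets inside $(\dF)_\ve\cap(\dF_1)_\ve\cap(\dF_2)_\ve$), and your justification of $\dF_1\triangle\dF_2\subseteq\dF$, which the paper takes for granted, is correct. For \eqref{boundpoints} the paper offers only the sentence that it is ``a direct consequence of \eqref{newcond} and the Steiner formula,'' so you are supplying a proof where the paper gives essentially none; your overall strategy (lower-bound the first-order Steiner term by $\int_{\{x\in X\}}(r\wedge\ve)\,\Theta_{d-1}(C,d(x,u))$, use nonnegativity of $\Theta_{d-1}(C,\cdot)$ and the monotone convergence $(r\wedge\ve)/\ve\uparrow 1$, and show the $j\ge 2$ terms are $o(\ve)$) is the natural way to make that claim precise.

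However, the step you yourself single out as the crux --- $I_j(\ve)/\ve\to 0$ for $j\ge 2$ --- is not closed by your splitting. On $\{r\ge\ve_0\}$ the argument is fine: that set is $h$-bounded ($C$ is compact), so $|\Theta_{d-j}|(C,\cdot)$ is finite there and the contribution is $O(\ve^{j-1})$. But on $\{r<\ve_0\}$ the quantity $\frac{1}{\ve}\int_{\{r<\ve_0\}}(r\wedge\ve)^j\,d|\Theta_{d-j}|$ is not controlled by \eqref{intcond}: the best pointwise majorant of $(r\wedge\ve)^j/\ve$ is $(r\wedge\ve_0)^{j-1}$, one power short of what \eqref{intcond} integrates, and the shrinking domain $\{d(x,X)\le 2\ve\}$ buys nothing when $X$ carries all the mass. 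Concretely, for $j=2$ a variation measure putting mass of order $4^n/n^2$ on $\{r\approx 2^{-n}\}$ satisfies \eqref{intcond} for every $c$, yet $\frac1\ve\int(r\wedge\ve)^2\,d|\Theta_{d-2}|\to\infty$ along $\ve=2^{-m}$; restricting to $\{r<\ve_0\}$ does not remove this, since the blow-up comes precisely from small $r$. The repair is to restrict \emph{before} invoking the Steiner formula: for fixed $\ve_0>0$ apply \eqref{steiner1} to the indicator of $P_\ve=\{x+tu:(x,u)\in\Nor(C),\ x\in X,\ r(x,u)\ge\ve_0,\ 0<t\le\ve\}$, which is contained in the triple $\ve$-neighbourhood of \eqref{newcond}; the unique-projection property for $t<r(x,u)$ gives exactly $\mu_d(P_\ve)=\sum_{j}\binom{d-1}{j-1}\frac{\ve^j}{j}\Theta_{d-j}(C,\{x\in X,\ r\ge\ve_0\})$ with all coefficients finite, so dividing by $\ve$ and letting $\ve\to0$ yields $\Theta_{d-1}(C,\{x\in X,\ r\ge\ve_0\})=0$, and letting $\ve_0\downarrow 0$ (using $\Theta_{d-1}(C,\cdot)\ge 0$ and $r>0$ on $\Nor(C)$) gives \eqref{boundpoints}. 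Alternatively, \eqref{d-1meas2} reduces \eqref{boundpoints} to ${\cal H}^{d-1}(X)=0$, which follows from $\mu_d(X_\ve)=o(\ve)$ by the standard comparison of lower Minkowski content with Hausdorff content, bypassing the signed higher-order measures entirely.
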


\begin{proof} The first assertion, \eqref{boundpoints}, is a direct consequence of \eqref{newcond} and the Steiner formula. 

Since $\dF_1\triangle\dF_2\subseteq \dF,$ we have $(\dF_1\triangle\dF_2)_{\ve}\subseteq (\dF)_{\ve}$ and at the same time $$(\dF_1\cap \dF_2)_{\ve} \subseteq (\dF_1)_\ve\cap(\dF_2)_{\ve} .$$
Therefore, 
$$
(\dF_1\triangle\dF_2)_{\ve}\cap (\dF_1\cap \dF_2)_{\ve}\subseteq (\dF)_{\ve}\cap (\dF_1)_\ve \cap (\dF_2)_{\ve} ,
$$
and \eqref{boundcond} follows from condition \eqref{newcond}. 

With respect to \eqref{boundcond2}, 
$$
z\in ((\dF_1)_\ve\cap(\dF_2)_{\ve})\setminus (\dF_1\cap \dF_2)_{\ve}
$$
implies that the distances $d_{\dF_1}(z), d_{\dF_2}(z)$ do not exceed  $\ve$, but $d_{\dF_1\cap\dF_2}(z) > \ve$. Therefore $z$ is on a distance smaller than or equal $\ve$ not from $\dF_1\cap\dF_2$, but from $\dF_1\triangle \dF_2$, that is, from $\dF$. Hence
$$
((\dF_1)_\ve\cap(\dF_2)_{\ve})\setminus (\dF_1\cap \dF_2)_{\ve}\subseteq (\dF)_{\ve}\cap (\dF_1)_\ve \cap (\dF_2)_{\ve}
$$
and \eqref{boundcond2} follows, again from \eqref{newcond}.
\end{proof}

We now discuss the normal cylinders of the sets involved. Apparently, this reduces to a discussion of the corresponding normal bundles.
The normal bundle $\Nor(C)$ of $C$ can be embedded as a subset into the union of the normal bundles $\Nor (\partial F_i)$, $i=1,2$. In fact, any $(x,u)\in \Nor (C)$ comes from a point $z\notin C$ which (uniquely) projects onto $x\in C$. If $x\in \dF_1$, then $x$ is also the projection of $z$ onto $\dF_1$ and hence $(x,u)\in\Nor (F_1)$. We similarly argue if $x\in\dF_2$.

In order to embed also $\Nor (\dF_1)$ into $\Nor (C)$, we have to neglect pairs $(x_1,u)$ from $\Nor (\dF_1)$ for which $u$ is not a normal at $x$ in $C$. For such a pair $(x_1,u)\in \Nor (\dF_1)$, there exists small enough $\ve$, such that all $z=x_1+t u, t\leq \ve,$ project onto $x_1$. Since $\dF_1\subseteq C$, there is a point $x_2\in C$ with
$$\|z-x_2\|=\inf_{x\in C}\|z-x\| \leq \inf_{x\in \dF_1}\|z-x\|=\|z-x_1\|,$$
and therefore all points $z=x_1+t u, t\leq \ve,$ are in $C_\ve$. However, $x_2$ has to be different from $x_1$. Otherwise, we would have $(z-x_2)/\|z-x_2\| = u$ and $(x_1,u)\in \Nor (C)$, a contradiction. Therefore, we obtain $z\in (\dF_1)_\ve\cap (\dF_2)_\ve \setminus C_\ve$. Applying the local magnification map $\tau_\ve^{(1)}$ to this set and using \eqref{boundcond2} and the Steiner formula, we deduce that
$$
\Theta_{d-1}(\dF_1,\Nor (\dF_1)\setminus \Nor (C)) =0,
$$
which shows that we can embed $\Sigma(\dF_1)$ into $\Sigma (C)$, up to a set of measure 0. In the same way, we can embed $\Nor (\dF_2)$ into $\Nor (C)$.

\medskip
Therefore, we identify now the normal cylinders $\Sigma (C)$ and $\Sigma (F_1) \cup\Sigma (F_2)$. If $B_i$ denotes the derivative  of $F_i(\ve)$ at $F_i$, $i=1,2$, 
the positive part $B_i^+$ and the reflection $R(B_i^-)$ of its negative part $B_i^-$ can be seen as subsets of $\Sigma (\partial F_i)$, as we have explained before Theorem \ref{thm:mes3} and therefore also as subsets of $\Sigma (C)$, for $i=1,2$. Since we can also embed the normal cylinders $\Sigma(F)$ of $F$ and $\Sigma(F_i)$ of $F_i$, $i=1,2$,  into $\Sigma = \Sigma (C)$ by the mapping $(t,x,u)\mapsto (t,x,u)$, for $t\ge 0$, and by the reflection $R: (t,x,u)\mapsto (-t,x,-u)$, for $t<0$, we can extend the measures $M_F$ and $M_{F_i}, i=1,2$, to $\Sigma (C)$, in the obvious way. We denote by $M_F^+, M_F^-$ the restrictions of $M_F$ to $\Sigma^+(F)$ respectively $\Sigma^-(F)$ and we use similar notations for the measures $M_{F_i}$, $i=1,2$.

\begin{lem}\label{lemma}
Suppose $F_1,F_2$ yield a normal decomposition of $F=F_1\cup F_2$. Then 
\begin{equation}\label{MCmeasure}
M_C = M_F^+ + M_{F_1}^-\circ R + M_{F_2}^-\circ R.
\end{equation}
\end{lem}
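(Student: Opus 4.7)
Both sides of \eqref{MCmeasure} carry a common $\mu_1$-factor (once the reflection $R$ is applied to the minus-summands on the right). Hence the plan is to reduce the identity to a pointwise (in $x\in\reg(C)$) comparison of the second factors, using \eqref{d-1meas1} for the solids $F,F_1,F_2$ and \eqref{d-1meas2} for the boundary set $C$. Concretely, I will decompose $\reg(C)$, up to an $\mathcal{H}^{d-1}$-null set, into three disjoint pieces: the outer-boundary part coming from $F_1$, $A_1:=\reg(C)\cap(\dF_1\cap\dF)\setminus\dF_2$, its symmetric partner $A_2$, and the interior bifurcation part $A_{12}:=\reg(C)\cap(\dF_1\cap\dF_2)\setminus\dF$. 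The leftover triple-point set $\reg(C)\cap\dF_1\cap\dF_2\cap\dF$ is $\Theta_{d-1}(C,\cdot)$-null by \eqref{boundpoints}, so it can be discarded.

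On $A_1$ one has $\nu_C(x)=\nu_F(x)=\nu_{F_1}(x)$, and \eqref{d-1meas2} places atoms of $\Theta_{d-1}(C,\cdot)$ at both $(x,\nu_F(x))$ and $(x,-\nu_F(x))$, while \eqref{d-1meas1} gives $\Theta_{d-1}(F,\cdot)|_x=\delta_{(x,\nu_F(x))}$ and $\Theta_{d-1}(F_1,\cdot)|_x=\delta_{(x,\nu_{F_1}(x))}$. After tensoring with $\mu_1$, the outer atom $(x,\nu_F(x))$ is produced on the right side by $M_F^+$ via the identity embedding of $\Sigma^+(F)$ into $\Sigma^+(C)$, while the inner atom $(x,-\nu_F(x))$ is produced (for $t>0$) by $M_{F_1}^-\circ R$, since $R$ sends the direction $\nu_{F_1}(x)$ to $-\nu_{F_1}(x)=-\nu_F(x)$. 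The remaining summand $M_{F_2}^-\circ R$ vanishes on $A_1$ because $x\notin\dF_2$. Piece $A_2$ is treated symmetrically with the roles of $F_1$ and $F_2$ interchanged.

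On $A_{12}$ one has $x\in\inter F$, hence $\Theta_{d-1}(F,\cdot)|_x=0$ and $M_F^+$ contributes nothing. The two normals of $C$ at $x$ are $\nu_{F_1}(x)$ and $\nu_{F_2}(x)=-\nu_{F_1}(x)$. After reflection, $M_{F_1}^-\circ R$ places an atom at $(x,-\nu_{F_1}(x))=(x,\nu_{F_2}(x))$ for $t>0$ and $M_{F_2}^-\circ R$ places an atom at $(x,\nu_{F_1}(x))$; their sum exactly reproduces $\mu_1|_{(0,\infty)}\otimes[\delta_{(x,\nu_{F_1}(x))}+\delta_{(x,\nu_{F_2}(x))}]$. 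Piecing the three cases together verifies \eqref{MCmeasure} on $\Sigma^+(C)$; the equality on $\Sigma(C)$ then follows from the joint invariance of both sides under $(t,x,u)\mapsto(-t,x,-u)$, which is built into the very embeddings of $\Sigma(F),\Sigma(F_i)$ into $\Sigma(C)$ described just before the lemma.

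The main conceptual obstacle is the bookkeeping at the bifurcation locus $A_{12}$: the ``outer'' half-cylinder $M_{F_1}^+$ is geometrically nontrivial there (since $(x,\nu_{F_1}(x))\in\Nor(F_1)$), yet it is absent from the right-hand side. The point is that, in $\Sigma(C)$, this half-cylinder coincides with the reflection of the inner half-cylinder of $F_2$ (because $\nu_{F_1}(x)=-\nu_{F_2}(x)$, and both parametrize the same physical points $x+t\nu_{F_1}(x)\in\inter F_2$ projecting to $x\in C$). The formula \eqref{MCmeasure} consistently uses the $M_{F_j}^-\circ R$ representative, and recognizing this identification, together with the disjointness of the interiors of $F_1,F_2$ (so the analogous identification on $\dF$ reduces to using $M_F^+$ in place of $M_{F_1}^++M_{F_2}^+$), is the only subtlety needed to complete the piecewise check.
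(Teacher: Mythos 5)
Your argument is correct and is in substance the same as the paper's: the paper's proof consists of asserting the support-measure decomposition $\Theta_{d-1}(C,\cdot)=\Theta_{d-1}(F,\cdot)+\Theta_{d-1}(F_1^\ast,\cdot)+\Theta_{d-1}(F_2^\ast,\cdot)$ as a consequence of \eqref{d-1meas1}, \eqref{d-1meas2} and \eqref{boundpoints}, which is precisely what your case analysis over $A_1$, $A_2$, $A_{12}$ (discarding the triple-intersection via \eqref{boundpoints}) verifies in detail, with the $M_{F_i}^-\circ R$ terms accounting for $\Theta_{d-1}(F_i^\ast,\cdot)$. The only cosmetic difference is that the paper states the identity at the level of the $\Theta_{d-1}$-factors and tensors with $\mu_1$ afterwards, whereas you carry the cylinder coordinate along throughout.
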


\begin{proof} 
The assertion follows from a corresponding decomposition of the support measures,
\begin{align*}
\Theta_{d-1}(C,\cdot) &=  \Theta_{d-1}(F,\cdot) +\Theta_{d-1}(F_1^\ast,\cdot)+\Theta_{d-1}(F_2^\ast,\cdot)
\end{align*}
which is a consequence of \eqref{d-1meas1} and \eqref{d-1meas2}, together with \eqref{boundpoints}.
\end{proof}

We now formulate our main result in this section.

\begin{thm}\label{cracks} Let $F_1(\ve), F_2(\ve), 0\le\ve\le 1,$ be two families of nonempty compact sets such that, for each fixed $\ve >0$, the sets $F_1 (\ve)$ and $F_2(\ve)$ are pairwise disjoint, and let $F(\ve) = F_1(\ve)\cup F_2(\ve)$. Assume that $F_1=F_1(0)$ and $F_2=F_2(0)$ provide a normal decomposition of $ F = F(0)= F_1\cup F_2$. 

If the families $F_i(\ve)$ are differentiable at $F_i$ with derivative $B_i$, $i=1,2$, then
$$
C(\ve) = F(\ve)\triangle F
$$
is  differentiable at
$$
C=  \dF_1\cup\dF_2  
$$
with derivative
$
B= \tilde B_1\cup\tilde B_2 
$
where
$$
\tilde B_i =\left\{ {R(B_i^-)\setminus B_j^+ {\text\ on \ } \Sigma (C_{ij}), {\text\ for\ }j\not= i,}\atop {B_i^+\cup R(B_i^-){\text{ \ otherwise.}}}\right.
$$
\end{thm}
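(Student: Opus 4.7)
The plan is to reduce the differentiability of $C(\ve) = F(\ve) \triangle F$ at $C$ to the hypothesized differentiabilities of $A_i(\ve) = F_i(\ve) \triangle F_i$ at $\partial F_i$. Essential boundedness of $C(\ve)$ with bound $T = \max(T_1,T_2)$ is immediate from $A_i(\ve) \subseteq (\partial F_i)_{\ve T_i}$, so I may assume $B \subset \Sigma_T$. The key set-theoretic identity, valid modulo $\mu_d$-null sets (since $F_1 \cap F_2 \subseteq \partial F_1 \cap \partial F_2$ has Lebesgue measure $0$ and $F_1(\ve) \cap F_2(\ve) = \emptyset$), is
\begin{equation*}
C(\ve) = (A_1^+ \setminus A_2^-) \cup (A_2^+ \setminus A_1^-) \cup (A_1^- \setminus A_2^+) \cup (A_2^- \setminus A_1^+),
\end{equation*}
with $A_i^\pm$ as in Corollary \ref{thm:mes2}. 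This follows from the indicator identity $\mathbf{1}_{F(\ve)} - \mathbf{1}_{F} = \sum_i (\mathbf{1}_{F_i(\ve)} - \mathbf{1}_{F_i})$ modulo $F_1 \cap F_2$, on noting that $A_1^+ \cap A_2^+ = \emptyset$, that $A_1^- \cap A_2^- \subseteq F_1 \cap F_2$ is $\mu_d$-null, and that the mixed cases $A_i^+ \cap A_j^-$ produce $\delta_1+\delta_2=0$ and hence fall outside $C(\ve)$.

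The geometric heart of the argument is an \emph{absorption} observation near $C_{12}$: any $z \in A_1^+$ lying in the $\ve T$-neighborhood of $C_{12}$ must sit on the $F_2$-side of $C_{12}$ (since $z \notin F_1$), and then $z \in F_1(\ve) \cap F_2 \subseteq F_2 \setminus F_2(\ve) = A_2^-$ by the disjointness $F_1(\ve) \cap F_2(\ve) = \emptyset$. Hence $A_1^+ \setminus A_2^-$, and symmetrically $A_2^+ \setminus A_1^-$, vanish in $(C_{12})_{\ve T}$. Over $C_{12}$, only $A_1^- \setminus A_2^+$ and $A_2^- \setminus A_1^+$ survive, populating the two opposite normal sides of $\Sigma(C_{12})$ and yielding precisely $R(B_1^-) \setminus B_2^+$ and $R(B_2^-) \setminus B_1^+$ in the limit. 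Conversely, over a single-component part $\partial F \cap \partial F_i \setminus \partial F_j$, the $A_j^\pm$ are locally empty for small $\ve$, so all four pieces collapse there to $A_i(\ve)$, giving $B_i^+ \cup R(B_i^-)$ in the limit. This reproduces exactly the dichotomy in the formula for $\tilde B_i$.

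It remains to translate $\tau_\ve^{(i)}$-convergence of $A_i(\ve)$ (in $\Sigma(F_i)$, by hypothesis) into $\tau_\ve$-convergence in $M_C$, where $\tau_\ve$ is the local magnification map of $C$. Via the embeddings $\Sigma(F_i) \hookrightarrow \Sigma(C)$ set up before Lemma \ref{lemma} (identity on $\Sigma^+(F_i)$, reflection $R$ on $\Sigma^-(F_i)$), the maps $\tau_\ve$ and $\tau_\ve^{(i)}$ coincide on those $z \in A_i(\ve)$ for which $p_{\partial F_i}(z) = p_C(z)$. The exceptional set where these projections differ is contained in $(\partial F_1)_\ve \cap (\partial F_2)_\ve \setminus (C_{12})_\ve$, which by \eqref{boundcond2} has $\mu_d$-measure $o(\ve)$; the first-order term of the local Steiner formula converts this into $o(1)$ $M_C$-measure of the magnified set. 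Combining this with the injectivity of $\tau_\ve$ (which lets set differences be distributed), the $M_C$-decomposition $M_C = M_F^+ + M_{F_1}^- \circ R + M_{F_2}^- \circ R$ of Lemma \ref{lemma}, and the fact that $\tau_\ve(A_i^\pm)$ lies in a single ``half'' of $\Sigma(C)$ determined by the sign, the four pieces of $C(\ve)$ converge in $M_C$ to the corresponding (embedded) set differences of $B_i^\pm$, and these assemble to $\tilde B_1 \cup \tilde B_2$. The main obstacle is this last translation from $\tau_\ve^{(i)}$- to $\tau_\ve$-convergence: the normal-decomposition hypothesis \eqref{newcond}, through its consequences \eqref{boundcond2} and Lemma \ref{lemma}, has to be strong enough that the ``overlap'' region produces no spurious limit mass, while the absorption observation in the second paragraph is what forces the asymmetric set-difference $R(B_i^-) \setminus B_j^+$ rather than a symmetric combination of $B_i$ and $B_j$.
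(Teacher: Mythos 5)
Your proposal is correct and follows essentially the same route as the paper: after the same essential-boundedness reduction, you invoke the measure decomposition $M_C = M_F^+ + M_{F_1}^-\circ R + M_{F_2}^-\circ R$ of Lemma \ref{lemma}, the embeddings of $\Sigma(F_i)$ into $\Sigma(C)$, the consequences \eqref{boundcond}, \eqref{boundcond2} of the normality condition \eqref{newcond} to control the overlap region, and the disjointness observation $F_1(\ve)\cap F_2\subseteq F_2\setminus F_2(\ve)$ (the paper's $F_1\cap F_2(\ve)$ term, mirrored) to produce the asymmetric piece $R(B_i^-)\setminus B_j^+$ on $\Sigma(C_{12})$. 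Your four-fold decomposition of $C(\ve)$ into the sets $A_i^{\pm}\setminus A_j^{\mp}$ is just the set-side bookkeeping of the paper's measure-side case analysis, so the two arguments coincide in substance.
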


\begin{proof} 
We start with the essential boundedness condition.
Since the families $F_i(\ve)\triangle F_i$ are essentially bounded, we may assume that there is a $T$  such that
$$
F_i(\ve)\triangle F_i \subset (\dF_i)_{T\ve}, \quad i=1,2.
$$
We may also put $T=1$. It is then easy to see that
$$
F(\ve)\triangle F \subset C_{\ve} ,
$$
hence $C(\ve)$ is essentially bounded.

In order to show that $F(\ve )\triangle F$ is differentiable at $C$ with derivative $B$, it remains to show that
$$M_C(\tau_\ve (F(\ve)\triangle F)\triangle B) \to 0,$$
as $\ve\to 0$. 
Observe that here $\tau_\ve$ is the magnification map belonging to $C$. Later, we will also use the magnification map $\tau_\ve^{(i)}$ belonging to $\dF_i$, $i=1,2$. For $z\notin C\cup S_C\cup S_{\partial F_1}\cup S_{\partial F_2}$, we then have $\tau_\ve (z) =  \tau_\ve^{(i)} (z)$, for some $i$ (possibly for both). 
We now use \eqref{MCmeasure} and discuss the effects of the different summands of $M_C$ to the set $\tau_\ve (F(\ve)\triangle F)\triangle B$ separately. 

Since $M_F^+$ is concentrated on $[0,\infty)\times \Nor (F)$ (notice that we can use $\Nor (F)$ instead of $\Nor_e(F)$ here), we can decompose $M_F^+$ into a sum $$M_F^+ = M^{(1)}+ M^{(2)},$$ where
$$
M^{(i)} = \mu_1^+\otimes \left[\Theta_{d-1}(F_i,\cdot)\fed \left(\Nor(F_i) \cap \Nor (F)\right)\right],\ i=1,2.
$$
Here, $\mu_1^+$ is the Lebesgue measure on $[0,\infty)$ and $\rho\fed A$ denotes the restriction of the measure $\rho$ to the set $A$. Using this decomposition and the facts that 
$$
\tau_\ve (F(\ve)\triangle F)\triangle B \subset [\tau_\ve (F(\ve)\setminus F)\triangle B]\cup \tau_\ve (F)
$$
and $M_F^+(\tau_\ve (F))=0,$ we first obtain
\begin{align*}
M_F^+(\tau_\ve (F(\ve)\triangle F)\triangle B)
&\le M_F^+(\tau_\ve (F(\ve)\setminus F)\triangle B)\cr
&=\sum_{i=1}^2 M^{(i)}(\tau_\ve (F(\ve)\setminus F)\triangle B).
\end{align*}
On $[0,\infty)\times (\Nor (F_1)\cap \Nor (F))$, we have 
\begin{align*}
\tau_\ve (F(\ve)\setminus F) &=\tau_\ve^{(1)} ((F(\ve)\setminus F)\cap (\dF_1)_\ve)\cr
&= \tau_\ve^{(1)}(F_1(\ve)\setminus F_1) \cup \tau_\ve^{(1)}((F_2(\ve)\setminus F_1)\cap (\dF_1)_\ve),
\end{align*}  
hence
\begin{align}\label{ineq1}
M^{(1)}&(\tau_\ve (F(\ve)\setminus F)\triangle B)\cr &\le M^{(1)}(\tau_\ve^{(1)}(F_1(\ve)\setminus F_1)\triangle B) +M^{(1)}(\tau_\ve^{(1)}((F_2(\ve)\setminus F_1)\cap (\dF_1)_\ve)).\quad\quad
\end{align}
Moreover,
$$
M^{(1)}(B_2^+) = M^{(1)}(R(B_2^-)) = M^{(1)}(B_1^-)= M^{(1)}(R(B_1^-))=0$$
(the latter fact arises, since $\Nor(F_1)$ and $R(B_1^-)$ are disjoint subsets of $\Nor (C)$).
Therefore,
\begin{align*}
M^{(1)}(\tau_\ve^{(1)} (F_1(\ve)\setminus F_1)\triangle B)&=
M^{(1)}(\tau_\ve^{(1)} (F_1(\ve)\setminus F_1)\triangle B_1^+)\cr
&= M^{(1)}(\tau_\ve^{(1)} (F_1(\ve)\triangle F_1)\triangle B_1^+)\cr
&= M^{(1)}(\tau_\ve^{(1)} (F_1(\ve)\triangle F_1)\triangle B_1).
\end{align*}
Since $M^{(1)}\le M_{F_1}$, we get
\begin{equation}\label{ineq2}
M^{(1)}(\tau_\ve^{(1)} (F_1(\ve)\setminus F_1)\triangle B)\le M_{F_1}(\tau_\ve^{(1)} (F_1(\ve)\triangle F_1)\triangle B_1)\to 0 ,
\end{equation}
as $\ve\to 0$, due to the differentiability of $F_1(\ve)$. 

Furthermore, we notice that points $z$ in $(F_2(\ve)\setminus F_1)\cap (\dF_1)_\ve$ which project onto $\dF_1$ must lie in $(\dF_1\setminus \dF_2)_\ve \cap (\dF_1\cap\dF_2)_\ve$, and so
\begin{align*}
M^{(1)}&(\tau_\ve^{(1)}((F_2(\ve)\setminus F_1)\cap (\dF_1)_\ve))\cr
&\le M^{(1)}(\tau_\ve^{(1)}((\dF_1\setminus \dF_2)_\ve \cap (\dF_1\cap\dF_2)_\ve)).
\end{align*}
The local Steiner formula \eqref{steiner1} shows that
\begin{align*}
\frac{1}{\ve} &\mu_d((\dF_1\setminus \dF_2)_\ve \cap (\dF_1\cap\dF_2)_\ve)\cr
&\quad =  M^{(1)}(\tau_\ve^{(1)}((\dF_1\setminus \dF_2)_\ve \cap (\dF_1\cap\dF_2)_\ve)) + o(\ve).
\end{align*}
Therefore, \eqref{boundcond} implies
\begin{equation}\label{ineq3}
 M^{(1)}(\tau_\ve^{(1)}((\dF_1\setminus \dF_2)_\ve \cap (\dF_1\cap\dF_2)_\ve)) \to 0,
\end{equation}
as $\ve\to 0$. Combining \eqref{ineq1}, \eqref{ineq2} and \eqref{ineq3} gives 
\begin{equation*}
 M^{(1)}(\tau_\ve (F(\ve)\setminus F)\triangle B) \to 0.
\end{equation*}

In the same way, we get
\begin{equation*}
 M^{(2)}(\tau_\ve (F(\ve)\setminus F)\triangle B) \to 0,
\end{equation*}
hence
\begin{equation}\label{firstpart}
 M_F^+(\tau_\ve (F(\ve)\triangle F)\triangle B) \to 0.
\end{equation}

Now, we consider 
$$
(M_{F_1}^-\circ R)(\tau_\ve (F(\ve)\triangle F)\triangle B).
$$
Observe that $\tilde M_{F_1}= M_{F_1}^-\circ R$ is a measure on $R(\Sigma^-(F_1))$. On this set, we have
\begin{align*}
\tau_\ve (F(\ve)\triangle F) &= \tau_\ve^{(1)} (F_1\setminus F(\ve))\cr
&= [\tau_\ve^{(1)} (F_1\setminus F(\ve))\cap \Sigma (F)] \cup [\tau_\ve^{(1)} (F_1\setminus F(\ve))\cap \Sigma_{12}],
\end{align*}
here $\Sigma_{12}= [0,\infty)\times (\Nor (\dF_1)\setminus \Nor (F))$ is the normal cylinder of $C_{12}$ in relative interior points of $C_{12}$ and with normals $u$ pointing into the interior of $F_1$. Notice that the sets $\tau_\ve^{(1)} ((F_1\setminus F(\ve))\cap \Sigma (F))$ and $\tau_\ve^{(1)} ((F_1\setminus F(\ve))\cap \Sigma_{12})$ live on different parts of the cylinder $\Sigma (C)$. Therefore,
\begin{align}\label{sum}
\tilde M_{F_1}&(\tau_\ve (F(\ve)\triangle F)\triangle B)
= \tilde M_{F_1}((\tau_\ve^{(1)} (F_1\setminus F(\ve))\cap \Sigma (F))\triangle (R(B_1^-)\cap\Sigma(F)))\nonumber\\
&\quad + \tilde M_{F_1}((\tau_\ve^{(1)} (F_1\setminus F(\ve))\cap \Sigma_{12})\triangle( (B_2^+\cup R(B_1^-))\cap\Sigma_{12}))\nonumber\\
&= \tilde M^{(1)}(\tau_\ve^{(1)} (F_1\setminus F(\ve))\triangle R(B_1^-))\\
&\quad + \tilde M^{(2)}(\tau_\ve^{(1)} (F_1\setminus F(\ve))\triangle (B_2^+\cup R(B_1^-))).\nonumber
\end{align}
Here, $\tilde M^{(1)}$ denotes the restriction of $\tilde M_{F_1}$ to $\Sigma (F)$ and $\tilde M^{(2)}$ is the restriction to $\Sigma_{12}$.

For the first summand, we use
\begin{align*}
\tau_\ve^{(1)} (F_1\setminus F(\ve))
=\tau_\ve^{(1)} (F_1\setminus F_1(\ve))
 \setminus\tau_\ve^{(1)} (F_1\cap F_2(\ve))
\end{align*}
and 
\begin{align*}
\tilde M^{(1)}(\tau_\ve^{(1)} &(F_1\setminus F_1(\ve))\triangle R(B_1^-))\to 0,
\end{align*}
since $F_1(\ve)$ is differentiable at $F_1$. Also
\begin{align}\label{limit0}
\tilde M^{(1)}(\tau_\ve^{(1)} (F_1\cap F_2(\ve)))\to 0.
\end{align}
In fact, the Steiner formula \eqref{steiner1} shows that
\begin{align*}
\tilde M^{(1)}(\tau_\ve^{(1)} (F_1\cap F_2(\ve))) =\frac{1}{\ve} \mu_d ((\dF_1)_\ve\cap (F_1\cap F_2(\ve))) + o(\ve).
\end{align*}
 Points $z\in (\dF_1)_\ve\cap (F_1\cap F_2(\ve))$ which project onto $\dF$ lie in $(\dF_1)_\ve\cap(\dF_2)_\ve\cap (\dF)_\ve$. Hence, the assertion follows from \eqref{newcond}.
  
 Together we get
\begin{align}\label{firstterm}
\tilde M^{(1)}(\tau_\ve^{(1)} (F_1\setminus F(\ve))\triangle R(B_1^-)) \to 0.
\end{align}

For the second summand in \eqref{sum}, we similarly have
\begin{align*}
\tau_\ve^{(1)} (F_1\setminus F(\ve)) =\tau_\ve^{(1)} (F_1\setminus F_1(\ve)) 
\setminus \tau_\ve^{(1)} (F_1\cap F_2(\ve))
\end{align*}
with
$$
\tilde M^{(2)}(\tau_\ve^{(1)} (F_1\setminus F_1(\ve))\triangle R(B_1^-))\to 0,$$
 again by the differentiability of $F_1(\ve)$. On the other hand, 
\begin{align*}
&\tilde M^{(2)}(\tau_\ve^{(1)} (F_1\cap F_2(\ve)))=
\tilde M_{F_1}(\tau_\ve^{(1)} ((F_1\cap F_2(\ve))\cap \Sigma_{12}))\cr
&= M_{F_2}^+(\tau_\ve^{(2)} (F_1\cap F_2(\ve))\cap \Sigma_{12})
 -\tilde M_{F_1}(\tau_\ve^{(1)} (F_1\cap F_2(\ve))\cap \Sigma (F)),
\end{align*}
taking into account the points in $F_1\cap F_2(\ve)$ which project onto $\dF$ and not onto $C_{12}$. Here,
$$M_{F_2}^+((\tau_\ve^{(2)} (F_1\cap F_2(\ve))\cap \Sigma_{12})\triangle(B_2^+\cap \Sigma_{12}))\to 0,$$
 by the differentiability of $F_2(\ve)$ and the term 
$$ \tilde M_{F_1}(\tau_\ve^{(1)} (F_1\cap F_2(\ve))\cap \Sigma (F))
$$
converges to 0 by our condition \eqref{newcond}, as we have seen in \eqref{limit0}. Hence,
$$\tilde M^{(2)}(\tau_\ve^{(1)} (F_1\cap F_2(\ve))\triangle B_2^+)\to 0.$$ 

Together we obtain
\begin{align}\label{secondterm}
&\tilde M^{(2)}(\tau_\ve^{(1)} (F_1\setminus F(\ve))\triangle(R(B_1^-)\setminus B_2^+)) \cr
&\ =\tilde M^{(2)}(\tau_\ve^{(1)} (F_1) \setminus F_1(\ve))\triangle \tau_\ve^{(1)} (F_1\cap F_2(\ve))\triangle(R(B_1^-)\triangle B_2^+))\cr
&\ \le \tilde M^{(2)}(\tau_\ve^{(1)} (F_1) \setminus F_1(\ve))\triangle R(B_1^-))+\tilde M^{(2)}(\tau_\ve^{(1)} (F_1\cap F_2(\ve))\triangle B_2^+)\cr
&\ \to 0.
\end{align}
From \eqref{firstterm} and \eqref{secondterm}, we arrive at
\begin{align}\label{secondpart}
(M_{F_1}^-\circ R)&(\tau_\ve(F\triangle F(\ve)))\triangle B)\to 0.
\end{align}

In the same manner, we get
\begin{align}\label{thirdpart}
(M_{F_2}^-\circ R)&(\tau_\ve(F\triangle F(\ve)))\triangle B)\to 0.
\end{align}
Combining \eqref{firstpart}, \eqref{secondpart} and \eqref{thirdpart}, we obtain the asserted differentiability.
\end{proof} 

\section{Parallel sets}

We now discuss some particular classes of  set-valued mappings which are differentiable, the subgraphs and the local or global parallel sets. 

Let $F=F(0)$ be a solid set and $h_{\ve}, 0\le \ve \le 1$, a family of nonnegative measurable functions on $\Nor (F)$ (with $h_0 =0$). As in \cite{Khm07}, we call
\begin{align*}
h_{\ve,{\rm sub}} &=\{ z=x+tu : (x,u)\in \Nor (F), 0< t\le h_\ve(x,u)\wedge r(x,u)\}
\end{align*}
the {\it subgraph} of $h_\ve$. We assume that the following two conditions hold:
\begin{enumerate}[label={(\alph*)}] 
\item{} For each $(x,u)\in \Nor (F)$, $\ve\to h_\ve (x,u)$ is differentiable at $\ve=0$  with derivative $g(x,u)$. Thus
$$
\frac{h_\ve(x,u)}{\ve}\to g(x,u), \quad
\ve\to 0.
$$
\item{}
There is a $\delta >0$, such that the function  $\max_{0<\ve\le\delta} \frac{h_\ve}{\ve}$ is  bounded and integrable  with respect to $\Theta_{d-1}(F,\cdot)$. Hence,
\begin{equation}\label{upperbound}
 \max_{0<\ve\le\delta} \frac{h_\ve(x,u)}{\ve}\le T,
\end{equation}
for some $T>0$ and 
$$\int_{\Nor (F)} \max_{0<\ve\le\delta} \frac{h_\ve(x,u)}{\ve} \, \Theta_{d-1}(F,d(x,u)) <\infty .
$$
\end{enumerate}

\begin{thm}\label{subgraphs}
Let $F$ be solid and let $h_{\ve}, 0\le \ve \le 1$, be a family of nonnegative measurable functions on $\Nor (F)$ satisfying conditions (a) and (b). Then, $A(\ve) = h_{\ve,{\rm sub}}$ is differentiable at $\dF$ and the derivative is
$$
B= \{(t,x,u) : 0< t\le g(x,u), (x,u)\in\Nor (F)\}.
$$
\end{thm}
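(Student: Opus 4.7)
The plan is to verify the two requirements in Definition 1: essential boundedness of $A(\ve)=h_{\ve,\mathrm{sub}}$, and convergence $M(\tau_\ve(A(\ve))\triangle B)\to 0$. Both follow from condition (b) combined with a Fubini computation plus dominated convergence.

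\textbf{Step 1: Essential boundedness.} By condition (b), for $\ve\le\delta$ and every $(x,u)\in\Nor(F)$ one has $h_\ve(x,u)\le T\ve$. Hence every $z=x+tu\in h_{\ve,\mathrm{sub}}$ satisfies $d(z)=t\le h_\ve(x,u)\le T\ve$, so $A(\ve)\subset(\dF)_{T\ve}$. The essential boundedness condition \eqref{bound} is therefore trivially satisfied (with the left side equal to $0$ for $\ve\le\delta$).

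\textbf{Step 2: Explicit form of $\tau_\ve(A(\ve))$.} Every $z=x+tu$ with $(x,u)\in\Nor(F)$ and $0<t\le r(x,u)$ projects uniquely onto $x$ by the definition of the reach function $r$, so $z\notin S_F\cup F$ and $\tau_\ve(z)=(t/\ve,x,u)$. Consequently
\begin{equation*}
\tau_\ve(A(\ve)) \;=\; \bigl\{(s,x,u)\in\Sigma : (x,u)\in\Nor(F),\; 0<s\le a_\ve(x,u)\bigr\}
\end{equation*}
with $a_\ve(x,u):=(h_\ve(x,u)\wedge r(x,u))/\ve$. Since $B$ has the analogous fiber description with $g(x,u)$ in place of $a_\ve(x,u)$, Fubini applied to $M=\mu_1\otimes\Theta_{d-1}(F,\cdot)$ gives
\begin{equation*}
M\bigl(\tau_\ve(A(\ve))\triangle B\bigr) \;=\; \int_{\Nor(F)} |a_\ve(x,u)-g(x,u)|\,\Theta_{d-1}(F,d(x,u)).
\end{equation*}

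\textbf{Step 3: Dominated convergence.} For each fixed $(x,u)\in\Nor(F)$ we have $r(x,u)>0$, so $r(x,u)/\ve\to\infty$; thus for all sufficiently small $\ve$ (depending on $(x,u)$) we get $a_\ve(x,u)=h_\ve(x,u)/\ve\to g(x,u)$ by hypothesis (a). For the dominating function, note that $a_\ve(x,u)\le h_\ve(x,u)/\ve\le \max_{0<\ve\le\delta}h_\ve(x,u)/\ve=:H(x,u)$ for $\ve\le\delta$, and letting $\ve\to0$ in the same inequality yields $g(x,u)\le H(x,u)$. Therefore
\begin{equation*}
|a_\ve(x,u)-g(x,u)| \;\le\; a_\ve(x,u)+g(x,u) \;\le\; 2H(x,u),
\end{equation*}
and $H$ is $\Theta_{d-1}(F,\cdot)$-integrable by condition (b). Dominated convergence gives $M(\tau_\ve(A(\ve))\triangle B)\to 0$, which together with Step 1 proves the differentiability with derivative $B$.

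\textbf{Main obstacle.} The one non-routine point is the $\wedge r$ truncation in the definition of $a_\ve$: although $r$ need not be bounded below on $\Nor(F)$, positivity of $r$ at each individual pair $(x,u)$ is enough for the pointwise limit, and the bound $a_\ve\le h_\ve/\ve$ handles domination without ever needing a uniform lower bound on $r$. Everything else is a direct application of Fubini and hypotheses (a)--(b).
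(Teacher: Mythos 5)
Your proof is correct and follows essentially the same route as the paper: essential boundedness is immediate from the uniform bound $h_\ve\le T\ve$ in condition (b), and the differentiability reduces via the product structure of $M$ to $\int_{\Nor(F)}\bigl|\tfrac{h_\ve\wedge r}{\ve}-g\bigr|\,d\Theta_{d-1}(F,\cdot)\to 0$, handled by dominated convergence with dominating function $2\max_{0<\ve\le\delta}h_\ve/\ve$. Your Step 2 (identifying $\tau_\ve(A(\ve))$ fiberwise) and the explicit remark that $g\le H$ just make explicit what the paper leaves implicit.
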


\begin{proof}
We first show that $A(\ve)$ is essentially bounded. Let $\delta$ be given as in (b) and let $T$ be the bound from \eqref{upperbound}. Suppose $\ve\le\delta$. Then, 
\begin{align*}
&\frac{1}{\ve} \mu_d (A(\ve)\cap (\R\setminus (\dF)_{\ve T})) \cr
&\ = \frac{1}{\ve} \mu_d (\{ z=x+tu : (x,u)\in\Nor (F), \ve T< t\le h_\ve(x,u)\wedge r(x,u)\}\cr
\end{align*}
equals $0$, because condition (b) implies that the set here is empty.
Hence, $A(\ve)$ is essentially bounded. 

With respect to the differentiability, we observe that
\begin{align*}
M(\tau_\ve &(A(\ve))\triangle B)\cr
&\ = \int_{\Nor (F)}\int_0^\infty {\bf 1}(\{ 0< t\le \frac{h_\ve(x,u)\wedge r(x,u)}{\ve}\}\triangle \{0<t\le g(x,u)\})\cr
&\quad\quad\times dt\, \Theta_{d-1}(F,d(x,u))\cr
&\ = \int_{\Nor (F)} \left| \frac{h_\ve(x,u)\wedge r(x,u)}{\ve}-g(x,u)\right| \Theta_{d-1}(F,d(x,u)).
\end{align*}
Since $r(x,u)>0$, for $(x,u)\in\Nor (F)$, the integrand converges to $0$ pointwisely. Also, 
\begin{align*}
\left| \frac{h_\ve(x,u)\wedge r(x,u)}{\ve}-g(x,u)\right| 
& \le \frac{h_\ve(x,u)\wedge r(x,u)}{\ve} + g(x,u)\cr
& \le 2 \max_{0<\ve\le\delta} \frac{h_\ve(x,u)}{\ve} ,
\end{align*}
and the latter function is integrable with respect to $\Theta_{d-1}(F,\cdot)$, by (b).
The Dominated Convergence Theorem thus implies
$$
M(F(\ve)\triangle B)\to 0, \quad \ve \to 0.
$$
This completes the proof of the theorem. \end{proof}

We remark that we could also start with a family $\tilde h_\ve, 0<\ve\le 1$, of functions on $\dF$ and put
$h_\ve (x,u) = \tilde h_\ve (x), (x,u)\in\Nor (F),$ or with a family $\bar h_\ve, 0<\ve\le 1$, of functions on $S^{d-1}$ and put
$h_\ve (x,u) = \bar h_\ve (u), (x,u)\in\Nor (F)$. 

As a particular case, $h_\ve = \ve g$ and the function $g$ could be given by the support function $h_K$ of a convex body $K$ with $0\in K$,
$$
g(x,u) = h_K(u), \quad (x,u)\in \Nor (F).
$$
The subgraph $h_{\ve,{\rm sub}}$, obtained in this case, is different in general from the outer parallel strip $F+\ve K\setminus F$. A differentiability result for outer parallel sets $F+\ve K$, $\ve\to 0$, under different conditions, is discussed in the final Section 6. However, if $K$ is the unit ball $B^d$ and
$$
h_\ve (x,u) =\ve h_{B^d} (u) =\ve ,
$$
then $h_{\ve,{\rm sub}} = F+\ve B^d\setminus F$, as can be easily seen.

A case of particular interest arises, if we choose, in the previous discussion, $h(x,u)=r(x,u)\wedge 1, (x,u)\in \Nor (F)$. If we define, for $\ve >0$, the {\it local parallel set} $F_{\ve, {\rm loc}}$ of $F$ as
$$
F_{\ve, {\rm loc}} = F\cup\{ z=x+tu : (x,u)\in\Nor F, 0<t\le \ve r(x,u)\wedge \ve\} ,
$$
then $F_{\ve, {\rm loc}}$ is the subgraph of $\ve h$. The derivative of $\ve h$ is $r\wedge 1$, hence in the above proof we have
\begin{align*}
M(\tau_\ve &(A(\ve))\triangle B)\cr
&\ = \int_{\Nor (F)} \left| \frac{h_\ve(x,u)\wedge r(x,u)}{\ve}-g(x,u)\right| \Theta_{d-1}(F,d(x,u))\cr
&\ = \int_{\Nor (F)} \left| \frac{\ve (r(x,u)\wedge 1)\wedge r(x,u)}{\ve}-(r(x,u)\wedge 1)\right| \Theta_{d-1}(F,d(x,u))\cr
&\ = 0 ,
\end{align*}
for $\ve\le 1$. Condition (b) is satisfied automatically since $r\wedge 1$ is bounded and integrable with respect to $\Theta_{d-1}(F,\cdot)$ by \eqref{intcond}. Hence, we obtain the following result.

\begin{cor} Let $F$ be a solid set. Then the local parallel set $F_{\ve, {\rm loc}}$, $0<\ve\le 1$, is differentiable at $F$ with derivative
$$
B = \{ (t,x,u) : (x,u)\in\Nor (F), 0\le t\le r(x,u)\wedge 1\}.
$$
\end{cor}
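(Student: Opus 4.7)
The plan is to obtain the corollary as a direct specialization of Theorem \ref{subgraphs}, choosing $h_\ve(x,u) := \ve\,(r(x,u)\wedge 1)$ on $\Nor(F)$. The first step is to verify that $F_{\ve,{\rm loc}} \setminus F$ coincides with the subgraph $h_{\ve,{\rm sub}}$ attached to this family: for $\ve\le 1$ one has $\ve(r\wedge 1) \le r$, hence $h_\ve\wedge r = \ve(r\wedge 1)$, which is exactly the defining bound $0<t\le \ve r(x,u)\wedge\ve$ appearing in the definition of $F_{\ve,{\rm loc}}$.

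Next, I would check the two hypotheses of Theorem \ref{subgraphs}. Condition (a) is immediate because $h_\ve/\ve = r\wedge 1$ does not depend on $\ve$, so it converges pointwise to the function $g := r\wedge 1$. For condition (b), taking any $\delta\in(0,1]$, the envelope $\max_{0<\ve\le\delta} h_\ve/\ve$ also equals $r\wedge 1$, which is bounded by $1$; its integrability with respect to $\Theta_{d-1}(F,\cdot)$ is the special case $i=d-1$, $c=1$ of the integrability relation \eqref{intcond} supplied by Theorem \ref{SF}.

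Applying Theorem \ref{subgraphs} then yields that $h_{\ve,{\rm sub}}$ is differentiable at $\dF$ with derivative $B = \{(t,x,u):(x,u)\in\Nor(F),\ 0\le t\le r(x,u)\wedge 1\}$. Since $F\subset F_{\ve,{\rm loc}}$, the symmetric difference satisfies $F_{\ve,{\rm loc}}\triangle F = F_{\ve,{\rm loc}}\setminus F = h_{\ve,{\rm sub}}$, so by Definition~2 the family $F_{\ve,{\rm loc}}$ is itself differentiable at $F$ with the same derivative $B$.

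There is essentially no obstacle here beyond bookkeeping: the statement is engineered so that condition (b) is automatic via the a priori integrability of the reach function guaranteed by Theorem \ref{SF}, and (a) is a tautology for a scale family. The only mildly subtle point, worth making explicit in the write-up, is that the $\wedge r$ truncation in the definition of the subgraph is redundant for $\ve\le 1$ because $\ve(r\wedge 1)\le r$ on all of $\Nor(F)$, which is precisely what makes the convergence in Theorem \ref{subgraphs} not merely an $o(1)$ statement but an identity.
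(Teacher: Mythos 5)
Your proposal is correct and follows essentially the same route as the paper: both specialize Theorem \ref{subgraphs} to $h_\ve=\ve(r\wedge 1)$, observe that $h_\ve\wedge r=\ve(r\wedge1)$ for $\ve\le1$ so that the defining integral for $M(\tau_\ve(A(\ve))\triangle B)$ vanishes identically, and secure condition (b) from the boundedness of $r\wedge1$ together with the integrability guaranteed by \eqref{intcond} for $i=d-1$, $c=1$. No gaps.
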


As a consequence, the parallel set $F+\ve B^d$ of a convex body $F$ is differentiable, as we already mentioned above. This is a special case of a result in \cite{Khm07} which shows differentiability of $F+\ve K$, for general convex bodies $F,K$. Our next goal is to extend the latter result to solid sets $F$

For this purpose, we consider the support function $h_K$ of $K$; it can be seen as a continuous function on $S^{d-1}$. We define a function $h_{K,F}$ on $\Nor (F)$ by
$$
h_{K,F}(x,u) = h_K(u), \quad (x,u)\in \Nor (F),
$$
and put
\begin{align*}
(h_{K,F})_{\rm sub} &= \{ (t,x,u)\in \Sigma : 0< t \le h_K(u)\}\cr
&\hspace{2cm}\cup \{ (t,x,u)\in \Sigma : h_K(u)\le t <0\} .
\end{align*}
Notice, that we do not require $0\in K$ here. This is another difference to the discussion of subgraphs above.

In the following theorem, we assume, in addition, that the support measure $\Theta_{d-1}(F,\cdot)$ is finite (this follows, for example, if $\partial F$ has finite $(d-1)$-st Hausdorff measure) and that the set of boundary points of $F$ which are not normal has ${\cal H}^{d-1}$-measure $0$. Here, a point $x\in\dF$ is called {\it normal}, if there is some ball $B\subset F$ with $x\in B$. 
 
\begin{thm} Let $F$ be a solid set with $\Theta_{d-1} (F,\Nor (F))<\infty$ and such that 
$$ {\cal H}^{d-1} (\{ x\in\dF : x {\rm \ not\ normal}\})=0.$$
 Let $K$ be a convex body.
Then $F(\ve) = F+\ve K, 0\le\ve\le 1$, is differentiable at $F$, and we have
$$\frac{d}{d\ve} F(\ve) = (h_{K,F})_{\rm sub} .
$$
\end{thm}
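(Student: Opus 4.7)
My plan is to verify Definition 1 directly for $A(\ve)=(F+\ve K)\triangle F$; the differentiability of $F(\ve)$ at $F$ with derivative $(h_{K,F})_{\rm sub}$ then follows from Definition 2. Essential boundedness is immediate: setting $\rho=\max_{k\in K}\|k\|$ and $T=\rho\vee 1$, any $y+\ve k\in (F+\ve K)\setminus F$ is at distance at most $\ve\rho$ from $F$, and any $z\in F\setminus(F+\ve K)$ has $(z-\ve K)\cap F=\emptyset$, so the ball $B(z,\ve\rho)\supseteq z-\ve K$ cannot lie in $F$; hence $A(\ve)\subseteq(\partial F)_{\ve T}$ and \eqref{bound} holds trivially.

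The main step is $M(\tau_\ve(A(\ve))\triangle B)\to 0$ with $B=(h_{K,F})_{\rm sub}$. By Fubini,
\[
M(\tau_\ve(A(\ve))\triangle B)=\int_{\Nor_e(F)}\mu_1\!\bigl((\tau_\ve(A(\ve))\triangle B)_{(x,u)}\bigr)\,\Theta_{d-1}(F,d(x,u)),
\]
each slice lying in $[-T,T]$. Finiteness of $\Theta_{d-1}(F,\cdot)$ and dominated convergence reduce matters to showing slice-wise $\mu_1$-convergence for $\Theta_{d-1}$-a.e.\ $(x,u)$. By \eqref{d-1meas1} applied to $F$ and to $F^*$, together with the normal-point hypothesis, I may restrict to pairs $(x,u)\in\Nor_e(F)$ at which $x$ is regular and admits an anchoring ball $B(y_0,r_0)\subseteq F$ with $u=(x-y_0)/r_0$, and moreover, for those pairs in $R(\Nor(F^*))$, has $r_-(x,u)>0$.

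Fix such a pair and set $r_+=r(x,u)>0$. For the \emph{outward slice}, the upper bound comes from $F\cap B(x+tu,t)=\{x\}$ for $t<r_+$, which yields the paraboloid inequality $\langle y-x,u\rangle\le\|y-x\|^2/(2r_+)$ for every $y\in F$: if $z=x+\ve s u=y+\ve k\in F+\ve K$ with $0<s\le T$, then $\|y-x\|\le\ve(\rho+T)$, and projecting on $u$ gives $s\le h_K(u)+C\ve$ with $C=(\rho+T)^2/(2r_+)$. For the lower bound I pick $k^*\in K$ with $\langle k^*,u\rangle=h_K(u)$ and compute
\[
\|(x+\ve s u-\ve k^*)-y_0\|^2-r_0^2=2r_0\ve(s-h_K(u))+\ve^2\|k^*-su\|^2,
\]
which is $\le 0$ whenever $s<h_K(u)$ and $\ve\le 2r_0(h_K(u)-s)/\|k^*-su\|^2$; hence $z-\ve k^*\in B(y_0,r_0)\subseteq F$ and $z\in F+\ve K$. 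Together these show that the slice of $\tau_\ve(A^+(\ve))$ at $(x,u)$ differs from $(0,h_K(u)]\cap(0,T]$ by $\mu_1$-measure $O(\ve)$. For the \emph{inward slice} (points $z=x+\ve t u$ with $t<0$, which lie in $\inter F$ for $|t|<r_-/\ve$), the same ball computation with $t$ in place of $s$ shows that $t<h_K(u)$ forces $z\in F+\ve K$ and hence $z\notin A^-(\ve)$; the paraboloid for $F^*$ at $(x,-u)$, available from $r_-(x,u)>0$, yields analogously that $z\notin F+\ve K$ forces $t\ge h_K(u)-O(\ve)$. Thus the slice of $\tau_\ve(A^-(\ve))$ converges in $\mu_1$ to $[h_K(u),0)\cap[-T,0)$. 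Dominated convergence in the Fubini identity now gives $M(\tau_\ve(A(\ve))\triangle B)\to 0$.

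The main technical obstacle is the lower-bound step, which is the only place where the normal-point hypothesis enters essentially: without an interior ball of $F$ at $x$ there is no guarantee that $F+\ve K$ extends to $\ve h_K(u)u$ in direction $u$. The upper bounds rest only on positivity of the appropriate reach (automatic on the respective normal bundles), while finiteness of $\Theta_{d-1}(F,\cdot)$ is used only to justify dominated convergence.
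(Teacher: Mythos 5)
Your overall strategy is the same as the paper's: control $F+\ve K$ near a boundary point $x$ from inside by an anchoring ball $B(y_0,r_0)\subseteq F$ (where the normal-point hypothesis enters) and from outside by the exterior touching ball of radius $r(x,u)$, then use finiteness of $\Theta_{d-1}(F,\cdot)$ to pass from fibrewise convergence to convergence in $M$. The paper organizes this through the exhaustion by the sets $\Sigma_{(k)}$ of points admitting an interior ball of radius $1/k$ and traps the symmetric difference in a shell of width $\frac{1}{\ve}\bigl(\sqrt{k^{-2}-\ve^2(a(u)^2-h_K^2(u))}-k^{-1}\bigr)\to 0$, while you argue slice-wise via Fubini; these are equivalent, and your treatment of essential boundedness and of the outward slice is complete and correct.

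The inward slice is where your argument has a genuine gap. Writing the fibre of $\tau_\ve(A^-(\ve))$ over $(x,u)$ as $\{t\in[-T,0): x+\ve tu\notin F+\ve K\}$, convergence to $[h_K(u),0)$ requires two inclusions: (b) $t<h_K(u)-O(\ve)$ implies $x+\ve tu\in F+\ve K$, and (a) $h_K(u)<t<0$ implies $x+\ve tu\notin F+\ve K$ for small $\ve$. Your anchoring-ball computation gives (b); but your second assertion, that $z\notin F+\ve K$ forces $t\ge h_K(u)-O(\ve)$, is just the contrapositive of (b) restated, and the paraboloid for $F^*$ at $(x,-u)$ cannot give more: it constrains points of the \emph{complement} of $F$ near $x$, whereas (a) requires that $F$ itself does not protrude past the tangent plane at $x$ in direction $u$. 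Indeed, if $x+\ve tu=y+\ve k$ with $y\in F$, then $\langle y-x,u\rangle\ge\ve(t-h_K(u))>0$ with $\|y-x\|\le\ve(\rho+T)$, and ruling this out needs the \emph{outer} paraboloid $\langle y-x,u\rangle\le\|y-x\|^2/(2r_+(x,u))$, hence $r_+(x,u)>0$. This is exactly the estimate you already used for the outward upper bound, and it is what the paper invokes implicitly via $F\subseteq H(x,u)$; so the repair is to apply it here as well. Note, however, that for inward pairs $(x,u)\in R(\Nor(F^*))\setminus\Nor(F)$ one has $r_+(x,u)=0$, and neither an interior ball nor regularity prevents $F$ from protruding (a cusp of the complement at $x$ does exactly this); so to close the argument you must either show that such pairs form a $\Theta_{d-1}$-null set or restrict to pairs of positive outer reach, as the paper's proof tacitly does.
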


\begin{proof} For $k=1,2,...$, let $\dF_{(k)}$ be the set of all regular boundary points $x$ of $F$ for which there  is a ball of radius $\ge 1/k$ inside $F$ with $x\in B$. Let $u=u(x)$ be the corresponding (outer) normal.  Let $\Sigma_{(k)}\subset \Sigma$ be the  part of the normal cylinder which belongs to points $(x,u(x)), x\in \dF_{(k)}$. We fix $k$ and choose $\ve$ small enough such that $\ve K \subset \frac{1}{k} B^d$. Then, we consider
$$
M([\tau_\ve (F(\ve)\triangle F)\triangle  (h_{K,F})_{\rm sub}]\cap  \Sigma_{(k)}) .
$$
For $x\in \dF_{(k)}$ (with normal $u$), we have $B(1/k) \subset F\subset H(x,u)$, where $B(1/k)$ is the ball of radius $1/k$ touching $F$ at $x$ from inside. $H(x,u)$ is the closure of the complement $\R\setminus C$, where $C$ is the ball of radius $r(x,u)$ touching $F$ in $x$ from outside. If the reach $r(x,u)$ is $\infty$, then $H(x,u)$ is the closed halfspace with outer normal $u$ and containing $x$ in the boundary. We divide $\Sigma_{(k)}$ further into the sets $\Sigma_{(k)}^+$ and $\Sigma_{(k)}^-$ according to the case where $h_{K,F}(x,u) \ge 0$, respectively $h_{K,F}(x,u) < 0$.

Since $\ve K \subset \frac{1}{k} B^d$, we have
$$
[\tau_\ve (F(\ve)\triangle F)\triangle  (h_{K,F})_{\rm sub}]\cap  \Sigma_{(k)}^+ = [\tau_\ve (F(\ve)\setminus F)\triangle  (h_{K,F})_{\rm sub}]\cap  \Sigma_{(k)}^+ 
$$
and
$$
\tau_\ve (F(\ve)\setminus F) \cap  \Sigma_{(k)}^+= \{ (\frac{t}{\ve}, x,u) : 0 < t \le g_{\ve K}(x,u)\} ,
$$
where $g_{\ve K}(x,u)$ is the distance from $x$ to $\partial F(\ve)$ in direction $u$. For $F=H(x,u)$ this distance would be $\ve h_K(u)$, for $F=B(1/k)$ the distance is $\ge \ve h_K(u) + \sqrt{(1/k)^2 - \ve^2 (a(u)^2 - h_K^2(u))}- 1/k$, where $a(u)$ is the maximal length of a point $y\in K$ with $\langle y,u\rangle = h_K(u)$. Hence
$$
 \ve h_K(u) + \sqrt{\frac{1}{k^2} - \ve^2 (a(u)^2 - h_K^2(u))} -\frac{1}{k}\le g_{\ve K}(x,u)\le \ve h_K(u) .$$
We obtain that
\begin{align*}
[&\tau_\ve (F(\ve)\setminus F)\triangle (h_{K,F})_{\rm sub}] \cap  \Sigma_{(k)}\cr
&\subset \left\{ (t, x,u) :  
h_K(u) + \frac{1}{\ve}\left(\sqrt{\frac{1}{k^2} - \ve^2 (a(u)^2 - h_K^2(u))} -\frac{1}{k}\right) \le t \le h_K(u) \right\} .
\end{align*}
Since
$$
\lim_{\ve\to 0}\frac{1}{\ve}\left(\sqrt{\frac{1}{k^2} - \ve^2 (a(u)^2 - h_K^2(u))} -\frac{1}{k}\right) =0 ,
$$
we see that 
$$
M([\tau_\ve (F(\ve)\setminus F)\triangle (h_{K,F})_{\rm sub}] \cap  \Sigma_{(k)}^+)\to 0. 
$$
In a totally analogous way, we obtain that 
$$
[\tau_\ve (F(\ve)\triangle F)\triangle  (h_{K,F})_{\rm sub}]\cap  \Sigma_{(k)}^- = [\tau_\ve (F \setminus F(\ve))\triangle  (h_{K,F})_{\rm sub}]\cap  \Sigma_{(k)}^- 
$$
and 
$$
M([\tau_\ve (F\setminus F(\ve))\triangle (h_{K,F})_{\rm sub}] \cap  \Sigma_{(k)}^-)\to 0.
$$
Hence,
$$
M([\tau_\ve (F\triangle F(\ve))\triangle (h_{K,F})_{\rm sub}] \cap  \Sigma_{(k)})\to 0 ,
$$
for each $k$, and therefore also
$$
M(\tau_\ve (F(\ve)\setminus F)\triangle (h_{K,F})_{\rm sub})\to 0,
$$
as $\ve\to 0$.
\end{proof} 

The conditions on $F$ are fulfilled, in particular, if $F$ is a convex body with interior points, the assumption on the normal boundary points then follows from \cite[Th. 2.5.5]{Sch}. As a corollary, we thus get the following result which was mentioned in \cite{Khm07} (with reference to \cite{Sch}, but without further details).

\begin{cor}
Let $F$ and $K$ be convex bodies und such that $F$ has interior points.
Then $F(\ve) = F+\ve K, 0\le\ve\le 1$, is differentiable at $F$, and we have
$$\frac{d}{d\ve} F(\ve) = (h_{K,F})_{\rm sub} .
$$
\end{cor}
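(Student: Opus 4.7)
The plan is to verify that a convex body $F$ with interior points satisfies the three hypotheses of the preceding theorem and then apply that theorem directly.

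The first two hypotheses are immediate from standard properties of convex bodies. Any convex body with nonempty interior coincides with the closure of its interior, and the boundary of a convex body carries zero Lebesgue measure (it is $(d-1)$-rectifiable, being locally the graph of a convex function); hence $F$ is solid. Moreover, as already recalled in Section 2, for convex bodies the support measures $\Theta_i(F,\cdot)$ are finite nonnegative Borel measures on $\Nor(F)$, and in particular $\Theta_{d-1}(F,\Nor(F))<\infty$.

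The third hypothesis, that ${\cal H}^{d-1}$-almost every boundary point of $F$ is normal in the sense defined before the theorem, is the one that genuinely uses the interior-point assumption. I would invoke \cite[Th.~2.5.5]{Sch}, which implies that for a convex body with interior, ${\cal H}^{d-1}$-a.e.\ boundary point $x$ admits an inscribed ball of $F$ of positive radius touching $\partial F$ at $x$; consequently the set of non-normal boundary points is ${\cal H}^{d-1}$-negligible.

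With the three hypotheses verified, the preceding theorem applies to the family $F(\ve)=F+\ve K$ and yields both the differentiability at $F$ and the identification of the derivative as $(h_{K,F})_{\rm sub}$. There is no real obstacle in the present corollary: it is a direct specialization of the preceding theorem to the convex-body setting, and the only nontrivial ingredient is the citation of Schneider's theorem for the normal-boundary-point condition.
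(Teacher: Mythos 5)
Your proposal is correct and matches the paper's own (very brief) argument exactly: the paper likewise derives the corollary by checking that a convex body with interior points is solid with finite $\Theta_{d-1}(F,\cdot)$ and by citing \cite[Th.~2.5.5]{Sch} for the ${\cal H}^{d-1}$-negligibility of the non-normal boundary points, then applying the preceding theorem. Nothing is missing.
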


\section{Variations}

The previous considerations show that the concept of differentiability of set-valued functions meets some difficulties, if one takes the step from convex compact bodies  to general compact sets $F$. This is mainly due to the fact that the boundary $\dF$ can have infinite Hausdorff measure ${\cal H}^{d-1}(\dF)=\infty$ and/or to the occurrence of points $(x,u)$ in the normal bundle with arbitrarily small reach $r(x,u)$. As a consequence, the definition of a differentiable family $F(\ve), 0\le\ve\le 1$, is no longer predetermined by the geometrical situation. We have chosen the concept which seems to be the natural extension of the situation for convex bodies. 
In this final section we discuss two variations  which would also lead to a meaningful theory.

First, we can change the essential boundedness condition \eqref{bound}. We call the family $A(\ve),0\le\ve\le 1,$  {\it weakly bounded}, if for each $\delta>0$ there exists a $T>0$ and $\ve_0 =\ve_0(\delta)$ such that
\begin{equation}\label{bound2}
\frac{1}{\ve} \mu_d(A(\ve)\cap (\R\setminus (\dF)_{\ve T})) <\delta,
\end{equation}
for all $\ve<\ve_0$. It is clear that \eqref{bound} implies \eqref{bound2}. Replacing \eqref{bound} by \eqref{bound2} would result in a slightly more general notion of differentiability. For example, in the discussion of subgraphs in Section 6, the condition in (b) that $\max_{0<\ve\le\delta}\frac{h_\ve}{\ve}$ is bounded could be dropped. Thus, integrability would be sufficient to show that $h_{\ve,{\rm sub}}$ is differentiable  at $\dF$. However, for weakly bounded families $A(\ve),0\le\ve\le 1,$ we could no longer assume $A(\ve)\subset (\dF)_{\ve T}$ and also the derivative $B$ would no longer satisfy $B\subset\Sigma_T$. This would require additional estimates in the proofs of the differentiability results which we wanted to avoid.

For a second variation, we remark that, different from the case of convex bodies or sets of positive reach, for a general solid set $F$ it is no longer true that $\mu(A(\ve))\sim \ve M(B(\ve))$ as $\ve\to 0$ (here, $B(\ve) =\tau_\ve (A(\ve))$). For example, it is not true any longer that $\mu_d((\partial F)_{\ve T})$ is of order $\ve M(\Sigma_{T})$ and smallness of one of these values does not imply finiteness of the other. If we want the derivative set $B$ to have finite $M$-measure, then $M(B(\ve))$ has to be controlled separately.

If $A(\ve),0\le\ve\le1,$ is essentially bounded with bound $T>0$, we can assume that $B(\ve)\subset\Sigma_T$. Now let 
$$R_c =\{(x,u)\in \Nor (F): \min( r_+(x,u), r_-(x,u))>c\},$$
for $c>0$ and consider the cylinder $\Sigma_{c,T}=[-T,T]\times R_c$. \\

\noindent
{\bf Definition 3.} Let $F\subset\R$ be a solid set. The set valued function $A(\ve), 0\leq \ve \leq 1,$ is called {\it $r$-differ\-en\-tiable} at $\dF$, with {\it derivative} $B$, if for any fixed $c>0$ and $B(\ve)=\tau_\ve(A(\ve))$
$$M( (B(\ve)\triangle B)\cap \Sigma_{c,T}) \to 0, \; {\rm as} \; \ve\to 0 . $$

\begin{lem}\label{lem:bounded} Suppose $A(\ve), 0\leq \ve \leq 1,$ is differentiable at $\dF$ with derivative $B$. Then it is $r$-differentiable at $\dF$ with the same derivative $B$.
\end{lem}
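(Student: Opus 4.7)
The plan is to observe that $r$-differentiability is a genuine weakening of the differentiability from Definition 1, making the implication an immediate consequence of monotonicity of the measure $M$. First I would unpack the two definitions. Differentiability of $A(\ve)$ at $\partial F$ requires (a) essential boundedness of $A(\ve)$ with some bound $T>0$, and (b) the existence of a Borel set $B\subset\Sigma$ such that $M(B(\ve)\triangle B)\to 0$ as $\ve\to 0$, where $B(\ve)=\tau_\ve(A(\ve))$. By essential boundedness, as noted in the paragraph preceding Definition 3, one may take $B(\ve)\subset\Sigma_T$ and $B\subset\Sigma_T$ for the same $T$ that enters the definition of $\Sigma_{c,T}=[-T,T]\times R_c$.

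Next I would fix an arbitrary $c>0$ and exploit the trivial inclusion $\Sigma_{c,T}\subset\Sigma$, which gives
\[
(B(\ve)\triangle B)\cap\Sigma_{c,T}\subset B(\ve)\triangle B.
\]
Since $M=\mu_1\otimes\Theta_{d-1}(F,\cdot)$ is a nonnegative (product) measure, monotonicity applied to this inclusion yields
\[
M\bigl((B(\ve)\triangle B)\cap\Sigma_{c,T}\bigr)\;\le\;M(B(\ve)\triangle B)\;\longrightarrow\;0\qquad(\ve\to 0),
\]
by the assumed differentiability. As $c>0$ was arbitrary, this is exactly the condition in Definition 3, and $A(\ve)$ is $r$-differentiable at $\partial F$ with the same derivative $B$.

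There is no real obstacle here: the lemma is essentially a compatibility check between the two notions. The content is that $r$-differentiability only monitors the part of $\Sigma$ where the reach stays bounded away from zero, so any symmetric-difference control valid on all of $\Sigma$ restricts automatically to $\Sigma_{c,T}$. The harder (and in general false) direction would be the converse, which would require separate control on the part of $\Sigma$ with arbitrarily small reach; this is precisely the issue motivating the introduction of Definition 3.
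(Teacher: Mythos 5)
Your proof is correct and is exactly the one-line monotonicity argument the paper has in mind (the authors in fact omit any proof of this lemma, treating it as immediate): since $\Sigma_{c,T}\subset\Sigma$ and $M$ is nonnegative, $M((B(\ve)\triangle B)\cap\Sigma_{c,T})\le M(B(\ve)\triangle B)\to 0$. Nothing further is needed.
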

The reverse statement is not generally true as will be shown by an example below. Therefore, $r$-differentiability is a strictly weaker property and there are more $r$-differentiable  set-valued functions then differentiable ones. In particular, if $F_\ve$ is the parallel set of $F$ then $A(\ve)=F_\ve\setminus F$ is not always differentiable, but it always is $r$-differentiable.

Recall that all measures $|\Theta_{d-j}(F,\cdot))|$ are finite on $R_c$ for any $c>0$.

\begin{lem}\label{lem:bounded2} Suppose $A(\ve), 0\leq \ve \leq 1,$ is $r$-differentiable at $\dF$ with derivative $B$. For $c>0$, let
$$A(\ve,c)=A(\ve)\cap\{z: \min( r_+(p(z),u(z)), r_-(p(z),u(z)))>c\}.$$
 Suppose that the measure $\BP$ satisfies condition \eqref{lambda} and the densities $\bar f_+$ and $\bar f_-$ are integrable with respect to $|\Theta_{d-i}(F,\cdot)|, i=1,\dots,d$, on the set $R_c$. Then
$$
\frac{d}{d\ve}\BP(A({\ve}, c))|_{\ve=0} = \BQ(\frac{d}{d\ve}A({\ve},c) |_{\ve = 0}) = \BQ(B\cap R_{c,T}) .
$$
In particular, if $\BP=\mu_d$ on $F_{\ve T}$, then
$$
\frac{d}{d\ve} \mu_d (A({\ve}, c))|_{\ve=0} = M (\frac{d}{d\ve}A({\ve},c) |_{\ve = 0}) = M(B\cap R_{c,T}) .
$$
\end{lem}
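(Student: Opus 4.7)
The plan is to mimic the proof of Theorem \ref{thm:mes}, exploiting two simplifications forced by the restriction to $R_c$: all variation measures $|\Theta_{d-j}|(F,\cdot)$ are finite on $R_c$ (since $R_c$ is $r$-bounded), and for $\ve<c$ one has $r_\pm(x,u)\wedge\ve=\ve$ on $R_c$, so the upper limit in each Steiner integral reduces to $\ve$. I would put $T=1$ for notational ease; since essential boundedness yields $A(\ve,c)\subset(\dF)_\ve$ and $\mu_d(\dF)=0$, the task reduces to analysing $\ve^{-1}\BP(A(\ve,c))$ as $\ve\to 0$.

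First I would introduce the auxiliary measure $\bar\BP$ as in Theorem \ref{thm:mes}, with density $\bar f_+\circ p_F$ on $F_\ve\setminus F$ and $\bar f_-\circ p_{F^*}$ on $F^*_\ve\setminus F^*$. Condition \eqref{lambda} gives $\ve^{-1}|\BP(A(\ve,c))-\bar\BP(A(\ve,c))|\to 0$, so it suffices to compute the limit of $\ve^{-1}\bar\BP(A(\ve,c))$. Splitting $A(\ve,c)=A^+(\ve,c)\cup A^-(\ve,c)$ according to $z\notin F$ or $z\in F$ and applying \eqref{steiner2} to each piece, the outer integration is effectively over $R_c$, because the defining condition on $A(\ve,c)$ places every projection pair $(p(z),\pm u(z))$ in $R_c$. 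The higher-order terms ($j\ge 2$) are bounded in modulus, just as in the proof of Theorem \ref{thm:mes}, by $\ve^j/j$ times the integral of $\bar f_\pm$ against $|\Theta_{d-j}|(F,\cdot)$ over $R_c$, which is finite by hypothesis and hence $o(\ve)$.

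For the first-order term, after the $\tau_\ve$-substitution and using $r_\pm\wedge\ve=\ve$ on $R_c$ for $\ve<c$, what remains is
\[
\int_{\Sigma_{c,1}}\bar f_\pm(x)\,{\bf 1}_{B^\pm(\ve)}(t,x,u)\,M(d(t,x,u)),
\]
with $B^\pm(\ve)=\tau_\ve(A^\pm(\ve,c))$. Here $M$ is finite on $\Sigma_{c,1}$ (since $\Theta_{d-1}(F,R_c)<\infty$) and $\bar f_\pm$ is bounded, so ${\bf 1}_{B^\pm(\ve)}$ is dominated on $\Sigma_{c,1}$ by an $M$-integrable function. The $r$-differentiability gives $M((B^\pm(\ve)\triangle B^\pm)\cap\Sigma_{c,1})\to 0$; combined with finiteness of $M|_{\Sigma_{c,1}}$, dominated convergence delivers the limit $\int_{\Sigma_{c,1}}\bar f_\pm(x)\,{\bf 1}_{B^\pm}\,dM$. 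Summing positive and negative contributions identifies the limit with $\BQ(B\cap R_{c,T})$, proving the first formula; the special case $\BP=\mu_d$ then follows by taking $\bar f_\pm\equiv 1$, whose integrability on $R_c$ is automatic. The main obstacle is verifying that the weaker $r$-differentiability still suffices in the dominated-convergence step, and this is exactly what the $c>0$ truncation buys: on the bounded piece $\Sigma_{c,1}$, convergence of symmetric differences in $M$-measure upgrades to $L^1(M)$-convergence of indicators.
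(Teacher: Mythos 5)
Your proof is correct and follows exactly the route the paper intends: the paper states this lemma without an explicit proof, the argument being that of Theorem \ref{thm:mes} restricted to $R_c$, which is precisely what you carry out. The two simplifications you isolate --- finiteness of the variation measures $|\Theta_{d-j}|(F,\cdot)$ on the $r$-bounded set $R_c$, and the identity $r_\pm(x,u)\wedge\ve=\ve$ there for $\ve<c$ --- are indeed what make the weaker $r$-differentiability hypothesis suffice in the final convergence step.
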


As an example, consider the solid set $F=F_1$ from the example in Section 5. For any $\ve>0$ the parallel set $F_\ve$ and $A(\ve) = F_\ve\setminus F$ contain the rectangle $[-\ve,\ve]\times[0,1]$. The $\Theta_{d-1}(F,\cdot)$ measure of the set 
$$N(\ve) = \{ (x,u)\in\Nor (F) : x\in [-\ve,\ve]\times[0,1]\}$$
is infinite since it is the Hausdorff measure ${\cal H}^{d-1}$ of $\dF\cap [-\ve,\ve]\times[0,1]$, but the integral 
$$\int_{N(\ve)} r_+(x,u)\Theta_{d-1}(F,d(x,u))$$
is finite. The image of $A(\ve)$ under the local magnification map is
$$\tau_\ve(A(\ve))=\{(t,x,u) : 0<t\leq \frac{r_+(x,u)}{\ve}\wedge 1\},$$
since there are no points $z\in \R$ with $d(z)>r(p(z),u(z))$. Therefore 
$$M(\tau_\ve (A(\ve))) = \int_{\Nor(F)} (\frac{r_+(x,u)}{\ve}\wedge 1)  \Theta_{d-1}(F,d(x,u)) <\infty $$
by \eqref{intcond}.
If $A(\ve)$ were differentiable, the derivative should be the set $\Sigma_1= \Nor(F)\times [0,1]$. Since $M(\Sigma_1)=\infty$, the convergence $M(\tau_\ve(A(\ve))\triangle \Sigma_1)\to 0$ cannot be true and, therefore, $A_\ve$ is not differentiable. However, it certainly is $r$-differentiable.


\noindent
Authors' addresses: 

\bigskip

\noindent
Est\'ate V. Khmaladze, Victoria University of Wellington, School of Mathematics, Statistics and Operations Research, 
PO Box 600, Wellington, New Zealand, estate.khmaladze@vuw.ac.nz

\bigskip

\noindent
Wolfgang Weil, Karlsruhe Institute of Technology (KIT), 
Department of Mathematics, 
D-76128 Karls\-ruhe, Germany, wolfgang.weil@kit.edu

\end{document}